\DeclareRobustCommand\widecheck[1]{{\mathpalette\@widecheck{#1}}}
\def\@widecheck#1#2{%
    \setbox\z@\hbox{\m@th$#1#2$}%
    \setbox\tw@\hbox{\m@th$#1%
       \widehat{%
          \vrule\@width\z@\@height\ht\z@
          \vrule\@height\z@\@width\wd\z@}$}%
    \dp\tw@-\ht\z@
    \@tempdima\ht\z@ \advance\@tempdima2\ht\tw@ \divide\@tempdima\thr@@
    \setbox\tw@\hbox{%
       \raise\@tempdima\hbox{\scalebox{1}[-1]{\lower\@tempdima\box
\tw@}}}%
    {\ooalign{\box\tw@ \cr \box\z@}}}
\theoremstyle{plain}
\def\beq{\begin{equation}}
\def\eeq{\end{equation}}
\def\beqn{\begin{eqnarray}}
\def\eeqn{\end{eqnarray}}
\DeclarePairedDelimiter\floor{\lfloor}{\rfloor}
\newtheorem{theorem}{Theorem}[section]
\newtheorem*{lemma*}{Lemma}
\newtheorem{proposition}[theorem]{Proposition} 
\theoremstyle{remark}
\newtheorem{remark}[theorem]{Remark}
\theoremstyle{definition}
\numberwithin{figure}{section}
\def\R{\mathbb R}
\def\Z{\mathbb Z}
\def\to{\rightarrow}
\def\dlim[#1][#2]{\lim_{#1 \to #2, #1 \neq #2}}
\def\Var{\text{$\mathbb{V}$ar}}
\def\Exp{\textup{Exp}}
\newcommand{\be}{\begin{equation}}
\newcommand{\ee}{\end{equation}}
\newcommand\bbullet{{{\scaleobj{0.6}{\bullet}}}} 
\newcommand\mydots{\hbox to 1em{.\hss.\hss.}}
\def\wt{\widetilde}    \def\wc{\widecheck}
\def\coal{{\mathbf z}} 
\def\bgeod#1#2{\mathbf{b}^{#1, #2}}
\newcommand{\myfootnote}[1]{
    \renewcommand{\thefootnote}{}
    \footnotetext{\scriptsize#1}
    \renewcommand{\thefootnote}{\arabic{footnote}}
}
\title{Independence property of the Busemann function \\ in exactly solvable KPZ models}
\author{Xiao Shen\thanks{\scriptsize{Department of Mathematics, University of Utah, Utah, USA. \texttt{xiao.shen@utah.edu}}}}
\date{}
\begin{document}
\maketitle

\begin{abstract}
The study of Kadar-Parsi-Zhang (KPZ) universality class has been a subject of great interest among mathematicians and physicists over the past three decades. A notably successful approach for analyzing KPZ models is the coupling method, which hinges on understanding random growth from stationary initial conditions defined by Busemann functions. To advance in this direction, 
we investigate the independence property of the Busemann function across multiple directions in various exactly solvable KPZ models. These models encompass the corner growth model, the inverse-gamma polymer, Brownian last-passage percolation, the O'Connell-Yor polymer, the KPZ equation, and the directed landscape.
In the context of the corner growth model, our result states that disjoint Busemann increments in different directions along a down-right path are independent, as long as their associated semi-infinite geodesics have nonempty intersections almost surely. The proof for the independence utilizes the queueing representation of the Busemann process developed by Sepp\"al\"ainen et al. As an application, our independence result yields a near-optimal probability upper bound (missing by a logarithmic factor) for the rare event where the endpoint of a point-to-line inverse-gamma polymer is close to the diagonal.

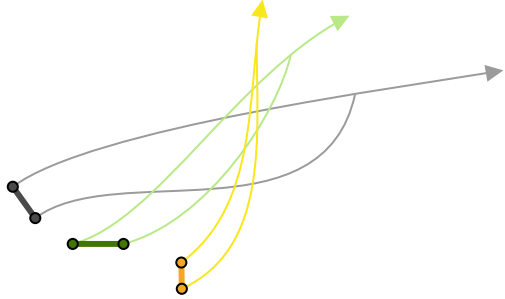
\begin{figure}[h]
\begin{center}
\tikzset{every picture/.style={line width=0.75pt}} 

\begin{tikzpicture}[x=0.75pt,y=0.75pt,yscale=-1,xscale=1]

\draw [color={rgb, 255:red, 155; green, 155; blue, 155 }  ,draw opacity=1 ]   (237.27,101.37) .. controls (276.67,71.82) and (432.72,51.26) .. (482.5,43) ;
\draw [shift={(484.7,42.63)}, rotate = 170.38] [fill={rgb, 255:red, 155; green, 155; blue, 155 }  ,fill opacity=1 ][line width=0.08]  [draw opacity=0] (8.93,-4.29) -- (0,0) -- (8.93,4.29) -- cycle    ;
\draw [color={rgb, 255:red, 155; green, 155; blue, 155 }  ,draw opacity=1 ]   (248.6,117.2) .. controls (288.6,87.2) and (393.9,128.63) .. (409.9,54.63) ;
\draw [color={rgb, 255:red, 184; green, 233; blue, 134 }  ,draw opacity=1 ]   (267.5,130.23) .. controls (309.76,119.1) and (349.88,45.3) .. (404.59,16.33) ;
\draw [shift={(407.1,15.03)}, rotate = 153.43] [fill={rgb, 255:red, 184; green, 233; blue, 134 }  ,fill opacity=1 ][line width=0.08]  [draw opacity=0] (8.93,-4.29) -- (0,0) -- (8.93,4.29) -- cycle    ;
\draw [color={rgb, 255:red, 184; green, 233; blue, 134 }  ,draw opacity=1 ]   (293.1,130.23) .. controls (336,118.93) and (371.1,63.83) .. (377.5,34.63) ;
\draw [color={rgb, 255:red, 248; green, 231; blue, 28 }  ,draw opacity=1 ]   (322.27,139.63) .. controls (361.47,110.22) and (355.15,56.67) .. (362.79,9.74) ;
\draw [shift={(363.27,6.88)}, rotate = 100.01] [fill={rgb, 255:red, 248; green, 231; blue, 28 }  ,fill opacity=1 ][line width=0.08]  [draw opacity=0] (8.93,-4.29) -- (0,0) -- (8.93,4.29) -- cycle    ;
\draw [color={rgb, 255:red, 248; green, 231; blue, 28 }  ,draw opacity=1 ]   (322.52,152.88) .. controls (367.27,132.13) and (360.27,75.03) .. (360.27,28.63) ;
\draw [color={rgb, 255:red, 74; green, 74; blue, 74 }  ,draw opacity=1 ][line width=2.25]    (237.27,101.37) -- (248.6,117.2) ;
\draw  [fill={rgb, 255:red, 74; green, 74; blue, 74 }  ,fill opacity=1 ] (234.72,101.37) .. controls (234.72,99.96) and (235.86,98.82) .. (237.27,98.82) .. controls (238.68,98.82) and (239.82,99.96) .. (239.82,101.37) .. controls (239.82,102.78) and (238.68,103.92) .. (237.27,103.92) .. controls (235.86,103.92) and (234.72,102.78) .. (234.72,101.37) -- cycle ;
\draw  [fill={rgb, 255:red, 74; green, 74; blue, 74 }  ,fill opacity=1 ] (246.05,117.2) .. controls (246.05,115.79) and (247.19,114.65) .. (248.6,114.65) .. controls (250.01,114.65) and (251.15,115.79) .. (251.15,117.2) .. controls (251.15,118.61) and (250.01,119.75) .. (248.6,119.75) .. controls (247.19,119.75) and (246.05,118.61) .. (246.05,117.2) -- cycle ;
\draw [color={rgb, 255:red, 65; green, 117; blue, 5 }  ,draw opacity=1 ][line width=2.25]    (267.5,130.23) -- (293.1,130.23) ;
\draw  [fill={rgb, 255:red, 65; green, 117; blue, 5 }  ,fill opacity=1 ] (264.95,130.23) .. controls (264.95,128.83) and (266.09,127.68) .. (267.5,127.68) .. controls (268.91,127.68) and (270.05,128.83) .. (270.05,130.23) .. controls (270.05,131.64) and (268.91,132.78) .. (267.5,132.78) .. controls (266.09,132.78) and (264.95,131.64) .. (264.95,130.23) -- cycle ;
\draw  [fill={rgb, 255:red, 65; green, 117; blue, 5 }  ,fill opacity=1 ] (290.55,130.23) .. controls (290.55,128.83) and (291.69,127.68) .. (293.1,127.68) .. controls (294.51,127.68) and (295.65,128.83) .. (295.65,130.23) .. controls (295.65,131.64) and (294.51,132.78) .. (293.1,132.78) .. controls (291.69,132.78) and (290.55,131.64) .. (290.55,130.23) -- cycle ;
\draw [color={rgb, 255:red, 245; green, 166; blue, 35 }  ,draw opacity=1 ][fill={rgb, 255:red, 255; green, 255; blue, 255 }  ,fill opacity=1 ][line width=2.25]    (322.27,139.63) -- (322.52,152.88) ;
\draw  [fill={rgb, 255:red, 245; green, 166; blue, 35 }  ,fill opacity=1 ] (319.72,139.63) .. controls (319.72,138.22) and (320.86,137.08) .. (322.27,137.08) .. controls (323.68,137.08) and (324.82,138.22) .. (324.82,139.63) .. controls (324.82,141.03) and (323.68,142.17) .. (322.27,142.17) .. controls (320.86,142.17) and (319.72,141.03) .. (319.72,139.63) -- cycle ;
\draw  [fill={rgb, 255:red, 245; green, 166; blue, 35 }  ,fill opacity=1 ] (319.97,152.88) .. controls (319.97,151.47) and (321.11,150.33) .. (322.52,150.33) .. controls (323.93,150.33) and (325.07,151.47) .. (325.07,152.88) .. controls (325.07,154.28) and (323.93,155.42) .. (322.52,155.42) .. controls (321.11,155.42) and (319.97,154.28) .. (319.97,152.88) -- cycle ;
\end{tikzpicture}
\captionsetup{width=.89\linewidth}
\caption*{\small Figure: The three Busemann increments in three different directions along a down-right path are shown above. Due to the specific positions and directions of the Busemann increments, almost surely the semi-infinite geodesics will intersect each other. By our result, the three Busemann increments are independent.}
\end{center}
\end{figure}
\end{abstract}

\myfootnote{Date: \today}
\myfootnote{2010 Mathematics Subject Classification. 60K35, 	60K37}
\myfootnote{Key words: last passage percolation, directed polymer, Kardar-Parisi-Zhang, Busemann function, random growth model.}

\section{Introduction}\label{intro}

The exploration of universality is at the core of probability and statistics. It refers to a remarkable phenomenon observed in many random growths, where diverse systems with different underlying microscopic details exhibit universal macroscopic behavior. 
One classic example illustrating universality is the \textit{central limit theorem} (CLT), which states that the large-scale behavior of the centered sum becomes independent of the specific distribution of the individual summands. With the fluctuation exponent $1/2$ and the Gaussian distribution as the
scaling law, this is often referred to as the \textit{Gaussian universality class}.

Just as coin tosses led to the CLT, based on physical experiments, Kardar, Parisi, and Zhang (KPZ) predicted another strikingly different universality class in 1986 \cite{Kar-Par-Zha-86}. This new universal behavior was anticipated to emerge across a wide range of stochastic models with spatial dependence.
Extensive computer simulations and empirical laboratory experiments have collectively suggested that the \textit{KPZ universality class} is extremely rich. It encompasses percolation models, directed polymers, interacting particle systems, random tilings, stochastic partial differential equations, and more.
Hence, studying the KPZ universality offers a unifying framework that enables one to comprehend a wide range of phenomena across different probabilistic models.
However, demonstrating that a model belonging to the KPZ universality class, meaning proving a statement similar to the CLT,  has remained an exceedingly challenging task.

Although the questions of universality remain out of reach, there are several \textit{exactly solvable KPZ models} which can be rigorously analyzed. The term ``exactly solvable models" refers to instances where specific probability distributions are employed within the models, enabling precise calculations.
The idea of selecting specialized weights that facilitate calculational insights has been a recurring theme throughout history. A notable instance of this is found in a particular version of the CLT, which was initially established in the 1700s for the scenario where the summands consist of i.i.d.~Bernoulli random variables. By utilizing Stirling's formula, it became feasible to formulate the binomial density and then take the limit as $n$ grew large, ultimately yielding an explicit formula which is now known as the Gaussian distribution. This strategic choice of weights, adhering to the Bernoulli distribution, transformed the i.i.d. sum into an exactly solvable model. This historical progression underscores the significance of exploiting exactly solvable models to unveil deep probabilistic insights.

Over the last twenty-five years, several different methods have surfaced for investigating the exactly solvable KPZ models. These approaches have originated from a spectrum of mathematical disciplines, including representation theory, algebraic combinatorics, probability theory, and stochastic analysis. 
Following the seminal work of Baik, Deift and Johansson \cite{Bai-Dei-Joh-99}, a particularly successful approach is recognized as \textit{integrable probability}. It relies on the remarkable structure of the exactly solvable models which admits the usage of algebraic tools.  Consequently, it becomes possible to explicitly write down the formulas for the one-point or multi-point probability distribution functions of these models. Another related approach is built upon the resampling property observed in line ensembles, a concept introduced by Corwin and Hammond \cite{bgibbs}. This is a mix of integrable probability and classical probability, and it has led to one of the most fundamental and well-studied principles for KPZ models, known as local Brownianity of the weight profile.

However, for the vast non-solvable members of the KPZ class, these intricate algebraic methods are not directly applicable. With the eventual goal of extending results beyond the exactly solvable cases, several other lines of research were developed relying only on the broadly applicable probabilistic techniques and geometric arguments.  Notably, one such technique is known as the \textit{coupling method}. It involves coupling two evolutions of random growth, one with a specific initial condition and the other with a stationary initial condition, which is easier to analyze.  By controlling the differences between the two systems, one can
draw conclusions about the growth with the specific initial condition of interest. 

The general idea of coupling goes back to the seminal works of Griffeath,  Harris, Holley, Liggett and Spitzer from the 70's for studying the exclusion process \cite{MR397926, MR356292, MR268960, MR418291, MR375533}. The coupling technique closer to the context above was introduced by Sepp\"al\"ainen in the 90's in a series of seminal works on both solvable and non-solvable interacting particle systems \cite{Sep-96, Sep-97-aap-2, Sep-97-aap-1, Sep-98-mprf-2, Sep-98-aop-2, Sep-98-mprf-1, Sep-98-ptrf, Sep-99-aop}. These works aimed at proving shape theorems, establishing hydrodynamic limits, and deriving large deviation principles.
Following this, the method was employed in the influential work of Cator and Groeneboom \cite{Cat-Gro-06} where they verified the conjectured KPZ fluctuation for a stationary version of Hammersley's process. More recently, this effort has been revolutionized by Emrah, Janjigian and Sepp\"al\"ainen \cite{rtail1} that made certain quantitatively optimal bounds possible \cite{cgm_low_up, OC_tail, Lan-Sos-22-a-, asep_tail}.

One of the key inputs for utilizing the coupling method is to understand the random growth starting from the stationary initial condition, which is defined using the \textit{Busemann functions}. Our paper focuses on investigating the independence properties of Busemann functions within exactly solvable KPZ models. In a broader context, this contributes to the advancement of the coupling technique. To demonstrate the application of our results, we provide a near-optimal upper probability bound (missing only a logarithmic factor) for the rare event where the endpoint of a point-to-line polymer approaches the diagonal.

\vspace{3mm}
\noindent\textbf{Acknowledgements.} 
The author expresses gratitude to  Erik Bates, Chris Janjigian, Firas Rassoul-Agha, and Evan Sorensen for their insightful discussions regarding their works on the Busemann process. The author also extends sincere thanks to Timo Sepp{\"a}l{\"a}inen for the valuable suggestions provided for this paper. Furthermore, the author acknowledges partial support from the Wylie Research Fund at the University of Utah.

\section{Main results}

We begin by presenting our results for two representative KPZ lattice models: the corner growth model and the inverse-gamma polymer. These models represent the zero-temperature and positive-temperature KPZ models, respectively. This classification is due to the fact that, under certain scaling, the corner growth model can be viewed as a limit of the inverse-gamma polymer. Thus, the process of taking the limit is often referred to as scaling the temperature parameter from a positive value to zero.

To avoid introducing additional definitions and notation, we will defer the discussion of our results for the Brownian last-passage percolation (Brownian LPP), the O'Connell-Yor polymer, the KPZ equation, and the directed landscape to Section \ref{more_models} (see Theorem \ref{semi} and Theorem \ref{cont}).

\subsection{The corner growth model}
The \textit{corner growth model} (CGM) is defined on the integer lattice  $\mathbb{Z}^2$.  Let $\{\omega_{\mathbf z}\}_{\mathbf z\in \Z^2}$ be i.i.d.~Exp(1) random variables. 
The {\it last-passage value} $G_{\mathbf  x,\mathbf y}$ between two coordinatewise-ordered vertices $\mathbf x\le \mathbf y$ of $\Z^2$ is the maximal  total  weight along an  up-right nearest-neighbor path from $\mathbf x$ to $\mathbf y$: 
\begin{equation}\label{sec2G}G_{\mathbf x,\mathbf y} = \max_{\mathbf z_{\bbullet}\, \in\, \Pi^{\mathbf x,\mathbf y}} \sum_{k=0}^{|\mathbf y-\mathbf x|_1} \omega_{\mathbf z_k}
\end{equation}
where $\Pi^{\mathbf x,\mathbf y}$ is the collection of paths $\mathbf z_{\bbullet}=(\mathbf z_k)_{k=0}^{|\mathbf y-\mathbf x|_1}$ that satisfy $\mathbf z_0=\mathbf x$, $\mathbf  z_{|\mathbf y-\mathbf x|_1}=\mathbf y$, and $\mathbf z_{k+1}-\mathbf z_k\in\{\mathbf e_1, \mathbf e_2\}$.
The almost surely unique maximizing path is commonly referred to as the \textit{point-to-point geodesic} or \textit{finite geodesic}.

A semi-infinite up-right path $(\mathbf 
 z_i)_{i=0}^\infty$ is a \textit{semi-infinite geodesic} if any of its finite segments is a point-to-point geodesic, that is,  
$$\forall k<l \text{ in } \Z_{\geq0}:  \ (\mathbf z_i)_{i=k}^l \in \Pi^{\mathbf z_k, \mathbf z_l} \quad \text{ and } \quad  G_{\mathbf z_k, \mathbf z_l}= \sum_{i=k}^l \omega_{\mathbf z_i}.$$
For a direction $\boldsymbol\xi\in\R^2_{\ge0}\setminus\{(0,0)\}$, the semi-infinite path $(\mathbf z_i)_{i=0}^\infty$ is $\boldsymbol\xi$-\textit{directed} if $\mathbf z_i/|\mathbf z_i|_1 \rightarrow \boldsymbol \xi/|\boldsymbol\xi|_1$ as $i\rightarrow \infty$. 
In the CGM, it is natural to index 
 spatial directions $\boldsymbol\xi$  by a  real parameter  $\rho\in(0,1)$  through the equation  
\be\label{char1}
    \boldsymbol\xi[\rho] = \left((1-\rho)^2, \rho^2 \right) . 
\ee
Following the works \cite{multicoupier, buse3, fppcoal2, coalnew}, almost surely every semi-infinite geodesic has an asymptotic direction, and for each fixed direction $\boldsymbol\xi[\rho]$ where $\rho\in(0,1)$,   the following holds almost surely:
\begin{itemize}[noitemsep]
\item For each $\mathbf x\in\Z^2$ there is a unique $\boldsymbol\xi[\rho]$-directed semi-infinite geodesic $\bgeod{\,\rho}{\mathbf x} =  \left(\bgeod{\,\rho}{\mathbf x}_i\right)_{i=0}^\infty$ such that  $\bgeod{\,\rho}{\mathbf x}_0=\mathbf x$.  
\item For each pair $\mathbf x,\mathbf y\in\Z^2$, the semi-infinite geodesics in the same direction coalesce:  there is a coalescence point $\coal^\rho(\mathbf x,\mathbf y)$ such that 
$\bgeod{\,\rho}{\mathbf x} \cap \bgeod{\,\rho}{\mathbf y}  = \bgeod{\,\rho}{\coal^\rho(\mathbf x,\mathbf y)} $.
\end{itemize}

\subsection{The inverse-gamma polymer}\label{inv_intro}

Recall that a random variable $X$ has the inverse-gamma distribution with shape parameter $\mu\in(0,\infty)$, abbreviated as $X\sim \text{Ga}^{-1}(\mu)$, if $X^{-1}$ has the gamma distribution with the shape parameter $\mu$. The probability density function is given by  
$$f_{X}(x) = \frac{1}{\Gamma(\mu)} x^{-1-\mu}e^{-x^{-1}}\mathbbm{1}_{(0, \infty)}(x)$$
where $\Gamma(a) = \int_0^\infty s^{a-1}e^{-s} ds$ is the gamma function. 

The inverse-gamma polymer is also defined in $\mathbb{Z}^2$. Let $\{\omega_\mathbf z\}_{\mathbf z\in \mathbb{Z}^2}$ be i.i.d.~inverse-gamma distributed random variables with a fixed shape parameter $\mu \in (0, \infty)$. To maintain consistency in the notation with the CGM, we set $\mu=1$, although we note that the exact same arguments can also be extended for general $\mu\in \mathbb{R}_{>0}$. For two coordinatewise-ordered vertices $\mathbf x\leq \mathbf y$ of $\mathbb{Z}^2$, the point-to-point partition function is defined by 
\begin{equation}\label{def_part}
Z_{\mathbf x, \mathbf y} = \sum_{z_\bbullet \in \Pi^{\mathbf x, \mathbf y}} \prod_{k=0}^{|\mathbf y-\mathbf x|_1} \omega_{\mathbf z_k}.
\end{equation}
We use the convention $Z_{\mathbf x,\mathbf y} =  0$ if $\mathbf x\leq \mathbf y$ fails. 
Moreover, the free energy is defined as $\log Z_{\mathbf x, \mathbf y}$.

The \textit{quenched polymer measure} is a probability measure on the set $\Pi^{\mathbf x,\mathbf y}$ and is defined by 
$$Q_{\mathbf x,\mathbf y}\{\mathbf z_{\bbullet}\} = \frac{1}{Z_{\mathbf x,\mathbf y}} \prod_{i=0}^{|\mathbf x-\mathbf y|_1} \omega_{\mathbf z_i}.$$
Compared to the corner growth model, the finite geodesic can be thought of as a (random) Dirac-delta measure on $\Pi^{\mathbf x, \mathbf y}$ which is supported on the maximizing path of the last-passage value $G_{\mathbf x, \mathbf y}$, while $Q_{\mathbf x, \mathbf y}$ is a (random) probability measure on $\Pi^{\mathbf x, \mathbf y}$ which is concentrated around the maximizing path of the free energy $\log Z_{\mathbf x, \mathbf y}$.
Lastly, for $\rho \in (0, 1)$, the characteristic direction is defined by
$$\boldsymbol\xi[{{\rho}}] =  ({\Psi_1({{\rho}})}, {\Psi_1({{1-\rho}})})$$
where $\Psi_1(z) = \frac{d^2}{dz^2} \log\Gamma(z)$ is the trigamma function.

\subsection{Busemann function}
The Busemann function is named after the German mathematician Herbert Busemann, who introduced this concept in the context of
(nonrandom) metric geometry. Then the idea was brought to the study of semi-infinite geodesics in a closely related non-solvable KPZ model called first-passage percolation, in the seminal works \cite{MR2114988,Hof-08, New-95}. In the exactly solvable KPZ models, the Busemann functions have been an essential tool for understanding the random geometry and the space-time profiles through coupling techniques, which we briefly introduced in Section \ref{intro}.

In the context of the CGM,
for $\mathbf x, \mathbf y \in \mathbb{Z}^2$, the Busemann function in the direction $\boldsymbol \xi[\rho]$ is defined as 
\begin{equation}\label{def_buse}
B^\rho_{\mathbf x,\mathbf y} = G_{\mathbf x, \coal^\rho(x,y)} - G_{\mathbf y, \coal^\rho(x,y)}
\end{equation}
where $\coal^\rho(\mathbf x,\mathbf y)$ is the coalescent point of the semi-infinite geodesics $\bgeod{\rho}{\mathbf x}$ and  $\bgeod{\rho}{\mathbf y}$. Alternatively, it can also be seen as the following almost sure limit, where $\mathbf v_N$ is a sequence of vectors such that $|\mathbf v_N|_1 \to \infty$ and $\mathbf v_N/|\mathbf v_N|_1 \to \boldsymbol\xi[\rho]/|\boldsymbol\xi[\rho]|_1$,
$$ B^\rho_{\mathbf x,\mathbf y} = \lim_{N\to \infty}G_{\mathbf x, \mathbf v_N} - G_{\mathbf y, \mathbf v_N}$$
The second definition is useful in the positive-temperature setting, as there are no geodesics.  Thus, the Busemann function in the inverse-gamma polymer is defined as the limit of the difference between two free energies:
$$B^\rho_{\mathbf x, \mathbf y} = \lim_{N\to \infty} \log Z_{\mathbf x, \mathbf v_N} -  \log Z_{\mathbf y, \mathbf v_N}.$$
We note that the existence of these limits is non-trivial, and proofs can be found in \cite{CGMlecture} for the CGM and \cite{Geo-etal-15} for the inverse-gamma polymer.

For a fixed direction,  one significance of the Busemann function is that it establishes a coupling between the semi-infinite geodesic in the CGM and the geodesics in CGM starting with stationary initial conditions, also known as the (increment) stationary last-passage percolation. In the positive-temperature setting, it connects the semi-infinite polymer measure in the inverse-gamma polymer to the (ratio) stationary inverse-gamma polymer. These connections enable us to study various important geometric properties in these models. For instance, it allows us to quantify the coalescence speed of two semi-infinite geodesics or the sampled paths from two semi-infinite polymer measures from two different starting points \cite{dual, ras-sep-she-, seppcoal}. 

The Busemann function can be regarded as a process that evolves as the directional parameter $\rho$ varies from $0$ to $1$. Based on the intertwining argument
that was originally developed for TASEP in \cite{stat_tasep} by Ferrari and Martin, the joint distribution in the directional parameter was first studied by Fan and Sepp\"al\"ainen \cite{Fan-Sep-20} in the CGM. Their results have led to significant advancements in the studies of geodesics, such as the proof of the non-existence of bi-infinite geodesics using the coupling techniques \cite{balzs2019nonexistence}, the demonstration of the local stationarity between the finite and semi-infinite geodesics \cite{balzs2020local}, the establishment of the universality of the geodesic tree \cite{uni_tree}, and a complete description of the global geometry of the full set of semi-infinite geodesics \cite{Jan-Ras-Sep-22-}.
More recently, the description for the joint distribution of the Busemann function was then extended to  the inverse-gamma polymer \cite{bat-fan-sep-23-}, the Brownian LPP, the directed landscape  \cite{busani2023diffusive, 2021arXiv211210729S}, theO'Connell-Yor polymer and the KPZ equation \cite{groathouse2023jointly}. For related works that were built on these results, see \cite{Bus-Sep-22, 2022arXiv220313242B, busani2023scaling, 2021arXiv211210729S}.

Finally, we provide an additional list of references for the study of the Busemann function on general lattice models, such as directed polymers, first- and last-passage percolation with general i.i.d.~environments, see \cite{neg_buse, Dam-Han-14, Dam-Han-17,  Geo-Ras-Sep-17-ptrf-1, groathouse2023existence, Jan-Ras-20-aop}.

\subsection{Main results}\label{sec:main}

Fix $K \in \mathbb{Z}_{>0}$ and $\rho_1 < \mydots < \rho_K \in (0, 1)$. Define an infinite down-right path $\mathcal{Y} =  \{\mathbf y_i\}_{i\in \mathbb{Z}}$ with $\mathbf y_{i+1} - \mathbf y_i \in \{\mathbf e_1, -\mathbf e_2\}$. Fix any $i_1<i_2 < \mydots < i_{K-1}$, and define the collections of \textit{Busemann increments} (which are Busemann functions evaluated at different points) as
\begin{align}
\mathcal{B}^1 &= \{B^{\rho_1}_{\mathbf y_{i+1}, \mathbf y_i} : -\infty < i < i_1\}\nonumber\\
\mathcal{B}^k &= \{B^{\rho_k}_{\mathbf y_{i+1}, \mathbf y_i} : i_{k-1}\leq  i < i_{k} \} \quad \text{ for }k \in \{2, \mydots, K-1\}\label{def_Bk}\\
\mathcal{B}^K &= \{B^{\rho_K}_{\mathbf y_{i+1}, \mathbf y_i} : i_{K-1}\leq  i < \infty \}. \nonumber
\end{align}
Our first main result below establishes the independence of these increments for both the CGM and inverse-gamma polymer. 
\begin{theorem}\label{main}
The collection of random variables $\mathcal{B}^1\cup \mathcal{B}^2 \cup  \mydots \cup  \mathcal{B}^K$ are mutually independent. 
\end{theorem}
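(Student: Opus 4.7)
The plan is to leverage the queueing representation of the joint Busemann process along a down-right path developed by Fan--Sepp\"al\"ainen \cite{Fan-Sep-20} for the CGM and extended by Bates--Fan--Sepp\"al\"ainen \cite{bat-fan-sep-23-} to the inverse gamma polymer. In that representation, the full collection of Busemann increments $\{B^{\rho_k}_{y_{i+1}, y_i} : 1 \le k \le K,\ i \in \mathbb Z\}$ along $\mathcal Y$ is realized as a deterministic function of $K$ mutually independent stationary input sequences $(X^{(1)}, \ldots, X^{(K)})$, with $X^{(k)}$ carrying the ``direction-$\rho_k$ noise.'' Each marginal $(B^{\rho_k}_{y_{i+1}, y_i})_{i \in \mathbb Z}$ has the known i.i.d.\ one-direction law (exponential for CGM, log-gamma for the polymer), and the cross-direction coupling is built up inductively by a cascade of D/G/1-type queueing maps indexed by consecutive pairs $(\rho_k, \rho_{k+1})$.

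The key property I would exploit is the regenerative structure of these queueing maps: there is an almost surely infinite sequence of ``queue-empty'' epochs along $\mathcal Y$, and at any such epoch $j$ the outputs of the map depend on the inputs indexed by $i \le j$ and $i > j$ through disjoint measurable blocks of input coordinates. The technical heart of the argument is a geometric dictionary: a queue-empty epoch at index $j$ for the map between directions $\rho_k$ and $\rho_{k+1}$ corresponds to a coalescence or crossing event of the $\rho_k$- and $\rho_{k+1}$-directed semi-infinite geodesics emanating from a pair of points on $\mathcal Y$ near $y_j$. Establishing this dictionary rigorously is what the main step amounts to, and it is where the monotonicity of semi-infinite geodesics in the direction parameter and their coalescence properties play a role.

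With the dictionary in hand, the hypothesis on the arrangement of the $\rho_k$'s along the down-right path (which forces a.s.\ intersection of geodesics in consecutive directions, as in the figure) supplies, for every $k = 1, \ldots, K-1$, a queue-empty epoch strictly between the block-boundary indices $i_{k-1}$ and $i_k$ in the map between $\rho_k$ and $\rho_{k+1}$. Iterating the decoupling across the $K-1$ boundaries, one expresses $\mathcal B^1, \ldots, \mathcal B^K$ as functions of pairwise disjoint measurable blocks of the independent input noise, from which mutual independence follows at once. The main obstacle is the dictionary step: one has to precisely match the lattice-level geodesic intersection events with queue-empty epochs in the cascade, uniformly across all $K-1$ adjacent direction pairs, and carry this through in parallel for both the zero-temperature and positive-temperature models, since the queueing maps are different (max-plus versus log-sum-exp).
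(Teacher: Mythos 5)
Your starting point --- the Fan--Sepp\"al\"ainen / Bates--Fan--Sepp\"al\"ainen queueing representation of the multi-direction Busemann process --- is the same as the paper's, but the mechanism you propose for extracting independence (regeneration at ``queue-empty'' epochs, matched via a geometric dictionary to geodesic crossings) has a genuine gap and is not how the exact independence arises. The block boundaries $i_1<\mydots<i_{K-1}$ in Theorem \ref{main} are \emph{deterministic}, while queue-empty epochs are random; so you cannot realize $\mathcal B^1,\mydots,\mathcal B^K$ as functions of pairwise disjoint \emph{deterministic} blocks of the input noise. Conditioning on the (random) locations of the regeneration epochs to make the blocks disjoint destroys the independence of the inputs you are trying to use, and at best yields a conditional statement. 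More fundamentally, the independence here is an exact Burke-type output theorem for stationary queues --- the same phenomenon as ``departures before time $t$ are independent of services after $t$'' --- not a regenerative one, and the a.s.\ geodesic-intersection condition in the abstract plays no role in the proof beyond dictating the admissible ordering of the directions $\rho_k$ along the down-right path. The paper never constructs, and does not need, a dictionary between queue-empty epochs and coalescence events; building one rigorously would be a separate (and harder) project.

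The paper's actual route is algebraic: (i) by \cite[Theorem 3.2, Lemma 3.3]{Fan-Sep-20} (resp.\ \cite[Theorems 4.1, 6.23]{bat-fan-sep-23-}), the joint Busemann process in directions $\rho_1,\mydots,\rho_K$ is equal in distribution to the increment fields of $K$ last-passage values $G^{A^k[-k]}_{(0,-k),\bbullet}$ driven by independent i.i.d.\ horizontal boundaries $A^k[-k]$ at levels $-k$ plus the common bulk weights; (ii) the exchange identity $D^{(1)}(D^{(1)}(A,B),C)=D^{(1)}(D^{(1)}(A,\wc B),D^{(1)}(B,C))$ (Proposition \ref{2level}) is iterated to rewrite everything feeding $\mathcal Q^2,\mydots,\mathcal Q^K$ in the relevant region as a function of the dual weights below a down-right path $\eta^1$ and the data beyond it; (iii) the Burke property along a down-right path with a horizontal boundary (Proposition \ref{BK}, itself a corner-flipping induction from \cite[Lemma B.2]{Fan-Sep-20}) then decouples $\mathcal Q^1$ from the rest, and one inducts over $k$. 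If you want to salvage your outline, you should replace the regeneration step by this exchange-plus-Burke argument; the queueing representation you invoke in your first paragraph is the right input, but the decoupling must come from the exact stationary output theorem, not from renewal structure.
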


\begin{remark}
The marginal distributions for these Busemann increments are also known in the literature. Specifically, we have $B^{\rho_k}_{\mathbf x, \mathbf x+\mathbf e_1} \sim \Exp(1-\rho_k)$ and $B^{\rho_k}_{\mathbf x, \mathbf x+\mathbf e_2} \sim \Exp(\rho_k)$ for the CGM \cite{CGMlecture}, and ${B^{\rho_k}_{\mathbf x, \mathbf x+\mathbf e_1}} \sim \log \textup{Ga}^{-1}(1-\rho^k)$ and ${B^{\rho_k}_{\mathbf x, \mathbf x+\mathbf e_2}} \sim \log \textup{Ga}^{-1}(\rho^k)$ for the inverse-gamma polymer \cite{Geo-etal-15}.
\end{remark}

\begin{remark}
Similar independence results also hold for the Brownian LPP, the O'Connell-Yor polymer, the KPZ equation, and the directed landscape. We postpone the statement for these models to Sections \ref{more_models} (see Theorem \ref{semi} and Theorem \ref{cont}).
\end{remark}

Theorem \ref{main} generalizes two known results regarding the independence of Busemann increments. The first result corresponds to a specific scenario covered by Theorem \ref{main} when $K = 1$. In this case, it establishes the independence of Busemann increments along the same direction within a down-right path.
The second result relates to another specific case of Theorem \ref{main} where the down-right paths $\mathcal{Y}$ form a horizontal line and there are only two available directions, resulting in $K = 2$. This fact can be directly derived from the queue representation of the Busemann process, where independence can be interpreted as departure times before time $t$ being independent of the service times that occur after $t$.

As an application, our independence result yields a near-optimal probability upper bound (missing by a logarithmic factor) for the rare event where the endpoint of a point-to-line inverse-gamma polymer is close to the diagonal. 
Let $Q^{\textup{ptl}}_{\mathbf 0, N}$ denote the point-to-line quenched path measure on the collection of directed paths from $(0,0)$ to the anti-diagonal line $x+y = 2N$. And let $\{\textsf{end}\leq k\}$ denote the sub-collection of these paths that intersect the $\ell^\infty$ ball of radius $k$, centered at $(N, N)$. 
\begin{theorem} \label{mid}
There exist finite strictly positive constants $\delta_0, N_0, C$ such that whenever $0< \delta \leq \delta_0$ and $N\geq N_0$ 
\begin{equation}\mathbb{E}\Big[Q^{\textup{ptl}}_{\mathbf 0, N}\{\textup{\textsf{end}} \leq \delta N^{2/3}\} \Big] \leq C |\log {(\delta\vee N^{-2/3})}|^{10}(\delta\vee N^{-2/3}).
\end{equation}
\end{theorem}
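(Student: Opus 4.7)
The plan is to dyadically partition the anti-diagonal around $(N,N)$ into a central window and geometrically spaced off-diagonal shells, assign a distinct Busemann direction to each shell, and then use Theorem \ref{main} to treat the shell contributions as independent, so that a concentration argument yields the near-optimal bound. We write $y_j = (N+j, N-j)$ and $J_0 = \{j : |j|\leq \delta N^{2/3}\}$, so that
$$Q^{\textup{p2l}}_{0,N}\{\textup{\textsf{end}}\leq \delta N^{2/3}\} \;=\; \frac{\sum_{j\in J_0} Z_{0, y_j}}{\sum_j Z_{0, y_j}}.$$
Monotonicity in $\delta$ reduces matters to $\delta\ge N^{-2/3}$, so the target becomes $C\delta|\log\delta|^{10}$. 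The main analytic input is the identity
$$Z_{0, y_j} \;=\; \omega_{y_j}\,\Pi^{\rho, 0}\{y_j\in\textup{path}\}\, e^{B^{\rho}_{0, y_j}} \qquad (\rho\in(0,1)),$$
obtained by sending $v_N\to\infty$ in direction $\xi[\rho]$ in the point-to-point passage formula $Q_{0, v_N}\{y_j\in\textup{path}\} = Z_{0, y_j}\,Z_{y_j, v_N}/(Z_{0, v_N}\,\omega_{y_j})$.

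Next we partition $\{j : \delta N^{2/3} < |j|\leq C N^{2/3}\}$ into $K\sim \log(1/\delta)$ dyadic shells $J_k = \{j : 2^{k-1}\delta N^{2/3} < |j|\leq 2^k \delta N^{2/3}\}$, and for each $k\geq 1$ pick a distinct $\rho_k\in(0,1)$ whose characteristic ray $t\mapsto t\,\xi[\rho_k]$ hits the level-$2N$ anti-diagonal inside $J_k$; these parameters are separated on the scale $\Theta(2^k \delta\, N^{-1/3})$. Because the shells $J_k$ are disjoint segments of the down-right anti-diagonal path, Theorem \ref{main} yields that the Busemann-increment families $\{B^{\rho_k}_{y_{i+1}, y_i} : i\in J_k\}$ are mutually independent across $k$. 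Applying the identity with $\rho = \rho_k$ on each $J_k$, together with the a priori lower bound $\Pi^{\rho_k, 0}\{y_j\}\gtrsim N^{-2/3}$ (valid on $J_k$ since it lies in the typical $N^{2/3}$-fluctuation window of the $\rho_k$-semi-infinite polymer), we obtain
$$\sum_{j\in J_k}Z_{0, y_j} \;\gtrsim\; |J_k|\,N^{-2/3}\, e^{B^{\rho_k}_{0, y_0} + S_k},$$
where $S_k$ is a sum of Busemann increments along the anti-diagonal segment from $y_0$ to a representative point of $J_k$. By Theorem \ref{main}, the variables $\{S_k\}_{k=1}^K$ are mutually independent, with explicit marginal laws (sums of logarithms of inverse-gamma variables whose parameters depend on $\rho_k$).

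For the numerator we use the trivial bound $\Pi^{1/2, 0}\{y_j\}\leq 1$ with $\rho = 1/2$ together with control of the Busemann fluctuation across the short window $J_0$, giving $\sum_{j\in J_0}Z_{0, y_j}\lesssim \delta N^{2/3}\cdot e^{B^{1/2}_{0, y_0} + O(1)}$. Inserting both estimates into the ratio and absorbing the residual comparison of base-point Busemann values $B^{\rho_k}_{0, y_0}$ (each differing from $B^{1/2}_{0, y_0}$ by at most $O(|\log\delta|)$ thanks to the $\rho_k$-separation) reduces the problem to bounding
$$\frac{\delta}{\sum_{k=1}^K 2^k\delta\, e^{S_k - \textup{base}_k}}$$
on a high-probability event. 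Independence of $\{S_k\}$ from Theorem \ref{main}, combined with moderate-deviation tail bounds on each $S_k$ and a union bound over the $K\sim|\log\delta|$ shells, then produces the polylogarithmic prefactor $|\log\delta|^{10}$.

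The hard part will be quantitative: one must establish a uniform high-probability lower bound on $\Pi^{\rho_k, 0}\{y_j\}$ across all $K$ shells with only polylog cost in the union bound, and sharp enough moderate-deviation tail control on the independent Busemann sums $S_k$ so that the $K$-fold concentration loses at most a polynomial in $|\log\delta|$. Both inputs should follow from one-point estimates for the inverse-gamma semi-infinite and stationary polymer models, together with the explicit inverse-gamma marginal laws of the Busemann increments supplied by the queueing-process construction that also underlies Theorem \ref{main}.
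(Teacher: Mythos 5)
Your proposal is not the paper's argument, and as written it has gaps that I do not think can be repaired without changing the structure. The paper first reduces Theorem \ref{mid} to a tail bound (Proposition \ref{prop_mid}) on the event that the quenched mass near the diagonal exceeds $e^{-|\log\delta|^2\sqrt\delta N^{1/3}}$, rewrites that event as ``the free-energy profile $i\mapsto \log Z_{0,(N+i,N-i)}$ has a small running maximum on \emph{both} sides of the near-diagonal point over a window of width $|\log\delta|N^{2/3}$,'' sandwiches the profile increments between two coupled random walks built from Busemann increments in just \emph{two} directions $\rho=\tfrac12\pm q_0 rN^{-1/3}$, invokes Theorem \ref{main} to make the left-side and right-side running-maximum events independent (they live on disjoint segments of the anti-diagonal in different directions), and bounds each by $C|\log\delta|^3\sqrt\delta$ via the ballot-type estimate of Proposition \ref{rwest}. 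The product $\sqrt\delta\cdot\sqrt\delta=\delta$ is exactly where the improvement over the previously known $\sqrt\delta$ bound comes from.

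Your scheme has three concrete problems. First, the ``a priori lower bound'' $\Pi^{\rho_k,0}\{y_j\}\gtrsim N^{-2/3}$ is not a priori at all: it is a quenched localization statement for the semi-infinite polymer, it fails with probability that is not small compared to $\delta$, and making it uniform over all $j$ in a shell and all $K\sim|\log\delta|$ shells with total failure probability $O(\delta\,\mathrm{polylog})$ is essentially circular — it is the same kind of endpoint-localization bound the theorem is trying to prove. Second, the claim that $B^{\rho_k}_{0,y_0}$ and $B^{1/2}_{0,y_0}$ differ by $O(|\log\delta|)$ is unjustified and appears false: already the means differ by order $N(\rho_k-\tfrac12)^2\sim (2^k\delta)^2N^{1/3}$, which is far larger than $|\log\delta|$ for most shells. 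Third, and most importantly, the final concentration step does not produce the factor $\delta$. Lower-bounding $\sum_{k\le K}2^k\delta\,e^{S_k-\mathrm{base}_k}$ by independence factorizes into a product of $K$ per-shell probabilities, but each event ``$e^{S_k-\mathrm{base}_k}\lesssim 2^{-k}$'' has probability bounded away from $0$ (the required deviation is comparable to the standard deviation of $S_k$), so the product is at best $\delta^{c}$ for some $c<1$, not $\delta$. The mechanism that actually yields $\delta$ is the ballot-problem estimate $\mathbb{P}(\max_{k\le n}S_k<l)\lesssim l(|\mu|+n^{-1/2})$ applied once on each side of the diagonal, multiplied thanks to the two-direction independence; your dyadic multi-direction decomposition does not recover it.
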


Based on the small deviation work \cite{small_deviation_LPP} for the CGM, we anticipate the optimal bound to be $C(\delta \vee N^{-2/3})$. However, achieving this requires a crucial estimate for the number of disjoint geodesics inside a parallelogram, which was proved in \cite{noninf}. This particular argument relies on the notion of geodesics in an essential way, which makes it only applicable in the zero-temperature setting.
In contrast, in the positive-temperature setting, the best-known upper bound for the inverse-gamma polymer before our results is in the form of $C |\log {(\delta\vee N^{-2/3})}|^{10}\sqrt{\delta\vee N^{-2/3}}$, see Remark 2.13 from \cite{ras-sep-she-}.

\subsection{Organization of the paper}

The preliminaries required for the proofs are outlined in Section \ref{q_basic}. Subsequently, the proof of Theorem \ref{main} can be found in Section \ref{pf_main1}. Both sections present the arguments formulated within the context of the CGM.
To adapt these proofs for the inverse-gamma polymer, we will discuss the necessary modifications and present the proof of Theorem \ref{mid} in Section \ref{ig_poly}.
Finally, Section \ref{more_models} the discussion how these results can be extended to the Brownian LPP, the O'Connell-Yor polymer, the KPZ equation, and the directed landscape.

\section{Preliminaries}\label{q_basic}

For simplicity of notation, we will present the preliminaries in the context of the CGM. The modifications needed for the other models will be discussed in Section \ref{ig_poly} and Section \ref{more_models}.

We start by defining a \textit{last-passage percolation} (LPP) starting from a horizontal boundary. To define this model, without the loss of generality, let us suppose that the starting point of the last-passage percolation is the origin.
Let $\{Y_{\mathbf z}\}_{\mathbf z\in \mathbb{Z}^2}$ be a collection of arbitrary positive weights. The weights $\{Y_{(j,0)}\}_{j\in \mathbb{Z}}$ will be the boundary value, and denote them as $Y[0] = (Y_{(j, 0)})_{j\in \mathbb{Z}}$. Once $Y[0]$ is given, define $\{h_i\}_{i \in \mathbb{Z}}$ to be 
$$h_i = \begin{cases}
\sum_{j=0}^i Y_{(j,0)} \qquad & \textup{ if } i \geq 0\\
-\sum_{j=1}^{|i|} Y_{(-j,0)} \qquad & \textup{ if } i < 0.
\end{cases}
$$
Recall the bulk last-passage value $G$ defined in \eqref{sec2G}, then the last-passage value starting from the horizontal boundary $Y[0]$ will be  
\begin{equation}\label{Gh}
G^{Y[0]}_{\mathbf 0, \mathbf x} = \max_{i \in \mathbb{Z}} h_i + G_{(i,1), \mathbf x}.
\end{equation}
We note that because of the horizontal boundary, \eqref{Gh} may not be well defined. For example, when $Y[0] \equiv 0$, the last-passage value $G^{Y[0]}_{0,x}$ may be infinity. We do not need to address this here, because, with the random weights which we introduce later, the last-passage value and the geodesic are finite almost surely.

For $\mathbf z\in \mathbb{Z}\times \mathbb{Z}_{\geq 0}$,  define the increments $I$, $J$ and the dual weights $\wc Y$ as 
\begin{align}
I_{[\![\mathbf z, \mathbf z+\mathbf e_1]\!]}  &= \begin{cases}
Y_{\mathbf z+\mathbf e_1}  \qquad & \text{ if }\mathbf z\cdot \mathbf e_2 =0\\
G^{{Y[0]}} _{\mathbf 0,\mathbf z+\mathbf e_1} - G^{{Y[0]}}_{\mathbf 0,\mathbf z}  \qquad & \text{ if } \mathbf z\cdot \mathbf e_2 \geq 1
\end{cases}\nonumber\\ 
J_{[\![\mathbf z, \mathbf z+\mathbf e_2]\!]} &= G^{{Y[0]}} _{\mathbf 0, \mathbf z+\mathbf e_2} - G^{Y[0]} _{\mathbf 0,\mathbf z} \label{def_inc}\\ 
\wc{Y}_{\mathbf z} &= I_{[\![\mathbf z, \mathbf z+\mathbf e_1]\!]} \wedge J_{[\![\mathbf z, \mathbf z+\mathbf e_2]\!]}\nonumber
\end{align}
Note that these quantities depend on the starting point of the last-passage value, which in the definition above, is the origin. However, we have chosen to omit this information from our notation to minimize the clustering of symbols.

Furthermore, we will rewrite our weights by their $ y$-coordinates. Let $Y[m] = (Y_{(j,m)})_{j\in\mathbb{Z}}$ for $m \in \mathbb{Z}$, and for $m\in \mathbb{Z}_{\geq 0}$, let 
\begin{align}
I[m] &= (I_{[\![(j-1, m), (j, m)]\!]})_{j\in \mathbb{Z}}\label{def_I}\\
\wc Y[m] &= (\wc Y_{(j-1,m)})_{j\in \mathbb{Z}}.\label{shift}
\end{align}
Notice the shift in the $x$-coordinate in \eqref{shift}.

Recall the queueing notation from \cite{Fan-Sep-20}, the inter-departure time (denoted by $D$) and the sojourn time (denoted by $S$) of a queue  with inter-arrival $Y[0]$ and service time $Y[1]$ are defined by 
\begin{align}
D^{(1)}(Y[0], Y[1]) &= \big(G^{Y[0]} _{\mathbf 0, (j,1)} - G^{Y[0]} _{\mathbf 0, (j-1,1)}\big)_{j\in \mathbb{Z}} =  I[1].\label{D2}\\
S(Y[0], Y[1]) &= \big(G^{Y[0]} _{\mathbf 0, (j,1)} - G^{Y[0]} _{\mathbf 0, (j,0)}\big)_{j\in \mathbb{Z}}.\nonumber
\end{align}
And in general, the inter-departure time with $n$ service stations, $D^{(n)}(Y[0], \mydots, Y[n])$, is defined by iteratively applying \eqref{D2}  $n$ times , i.e.
\begin{equation}\label{Dn}
D^{(n)}(Y[0], \mydots, Y[n])= D^{(1)}\Big(\mydots D^{(1)}\Big(D^{(1)}\Big(Y[0], Y[1]\Big),Y[2]\Big) \mydots , Y[n]\Big).
\end{equation}

The first proposition below states that the inter-departure time of a queue with $n$ service stations, $D^{(n)}(Y[0], \mydots, Y[n])$, can be seen as the last-passage increments on level $n$ under the environment $\{Y[m]\}_{m=0}^n$.
\begin{proposition} \label{D_to_LPP} Fix $n \geq 1$, and suppose $\{Y[m]\}_{m=0}^n$ are chosen so that the following queueing operations and last-passage values are well defined, we have
$$D^{(n)}(Y[0], \mydots, Y[n]) = \big(G^{Y[0]} _{\mathbf 0, (j,n)} - G^{Y[0]} _{\mathbf 0, (j-1,n)}\big)_{j\in \mathbb{Z}} = I[n].$$
\end{proposition}

Additionally, the sojourn time between service stations $n-1$ and $n$ is equal to the vertical increments between levels $n-1$ and $n$.
\begin{proposition} \label{S_to_LPP} Fix $n \geq 1$, and suppose $\{Y[m]\}_{m=0}^n$ are chosen so that the following queueing operations and last-passage values are well defined, we have
$$S(D^{(n-1)}(Y[0], \mydots, Y[n-1]), Y[n]) = \big(G^{Y[0]} _{\mathbf 0, (j,n)} - G^{Y[0]} _{\mathbf 0, (j,n-1)}\big)_{j\in \mathbb{Z}}.$$
\end{proposition}

The two aforementioned propositions are direct consequences of a fundamental result concerning nested LPP, which we shall now define and introduce.  We define a nested LPP system as $G^{I[n-1]}_{(0, n-1), \bbullet}$ which starts at $(0, n-1)$ and utilizes the horizontal boundary $I[n-1]$. The term ``nested LPP" is because of the fact that its boundary values are computed from the (outer) LPP $G^{Y[0]}_{\mathbf 0, \bbullet}$. 

The next proposition shows that the increments of $G^{I[n-1]} _{(0, n-1), \bbullet}$ and $G^{Y[0]} _{\mathbf 0, \bbullet}$ are equal between level $n$ and $n-1$.
It is worth noting that the proof is more complicated than the analogous results for the southwest and anti-diagonal boundaries, due to the presence of a horizontal boundary. 
\begin{proposition}\label{A_nest}
For each $n \in \mathbb{Z}_{>0}$ and $j\in \mathbb{Z}$, suppose that both $G^{Y[0]}_{\mathbf 0, (j, n)}$ and $G^{I[n-1]} _{(0, n-1), (j, n)}$ are finite with finite geodesics. Then, it holds that 
\begin{align*}
G^{Y[0]}_{\mathbf 0, (j, n)} - G^{Y[0]}_{\mathbf 0,(j-1, n)} &= G^{I[n-1]} _{(0, n-1), (j, n)} - G^{I[n-1]} _{(0, n-1), (j-1, n)} \\
G^{Y[0]}_{\mathbf 0, (j, n)} - G^{Y[0]}_{\mathbf 0,(j, n-1)} &= G^{I[n-1]} _{(0, n-1), (j, n)} - G^{I[n-1]} _{(0, n-1), (j, n-1)}.
\end{align*}
\end{proposition}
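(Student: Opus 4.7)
The plan is to decompose both last-passage values by splitting at the unique column where the maximizing path leaves level $n-1$ for level $n$, and then to telescope along the nested boundary so that the two resulting optimization problems differ by a single additive constant that is independent of $j$.

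For the decomposition, the finite-geodesic assumption implies that any optimizing up-right path to $(j,n)$ steps from $(i, n-1)$ up to $(i,n)$ for a unique column $i \leq j$ and then walks rightward to $(j, n)$. This gives
\begin{align*}
G^{Y[0]}_{0, (j, n)} &= \max_{i \leq j}\Big[G^{Y[0]}_{0, (i, n-1)} + \sum_{k=i}^{j} Y_{(k, n)}\Big],\\
G^{I[n-1]}_{(0, n-1), (j, n)} &= \max_{i \leq j}\Big[G^{I[n-1]}_{(0, n-1), (i, n-1)} + \sum_{k=i}^{j} Y_{(k, n)}\Big],
\end{align*}
with no double-counting because the left term in each bracket already contains the weight at $(i,n-1)$ and the level-$n$ sum starts at $(i,n)$.

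For the telescoping, I would use that $I[n-1]_k = G^{Y[0]}_{0,(k,n-1)} - G^{Y[0]}_{0,(k-1,n-1)}$ by \eqref{def_inc}, while the nested boundary value $G^{I[n-1]}_{(0,n-1),(i,n-1)}$ is by construction the cumulative sum of $I[n-1]$ measured from the base column $0$. Summing these telescoping differences on each side of $0$ yields the key identity
$$G^{I[n-1]}_{(0, n-1), (i, n-1)} = G^{Y[0]}_{0, (i, n-1)} - G^{Y[0]}_{0, (0, n-1)} \qquad \text{for every } i \in \mathbb{Z}.$$
Substituting this into the max formulas above pulls the $j$-independent constant $G^{Y[0]}_{0, (0, n-1)}$ out of the supremum and gives $G^{I[n-1]}_{(0, n-1), (j, n)} = G^{Y[0]}_{0, (j, n)} - G^{Y[0]}_{0, (0, n-1)}$. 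The horizontal increment identity at level $n$ then follows immediately since the offset does not depend on $j$, and the vertical identity follows by subtracting the level-$(n-1)$ version of the same offset identity from the level-$n$ version.

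The main obstacle I expect is the careful bookkeeping for the boundary cumulant $h$ at negative indices: the sign-flipped definition of $h_i$ for $i<0$ given in the preliminaries must be matched with the cumulative-sum interpretation of $I[n-1]$ to the left of the base column, so that the telescoping identity displayed above is valid uniformly over $i \in \mathbb{Z}$ rather than only for $i$ on one side of $0$. This is precisely the feature that the paper flags as making the horizontal-boundary case harder than the southwest or anti-diagonal cases; once this alignment is pinned down, no further probabilistic input is needed and the rest of the argument is a purely deterministic manipulation of the two decompositions.
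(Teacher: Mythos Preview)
Your approach differs from the paper's and is more direct. The paper does not try to relate the two last-passage values pointwise; it invokes Lemma~A.4 of \cite{Fan-Sep-20}, a uniqueness statement for sequences $(\wt I_j,J_j)$ satisfying the one-step recursions $\wt I_j=\omega_j+(I_j-J_{j-1})^+$ and $J_j=\omega_j+(I_j-J_{j-1})^-$ together with the anchoring condition $J_j=\omega_j$ for infinitely many $j$ (the latter supplied by the finite-geodesic hypothesis). Since both the outer level-$n$ increments and the nested departure/sojourn times satisfy this recursion with the same data $(I[n-1],\{Y_{(\cdot,n)}\})$, they coincide. Your route instead aims for the stronger pointwise identity $G^{I[n-1]}_{(0,n-1),(j,n)}=G^{Y[0]}_{0,(j,n)}-C$ by telescoping the nested boundary back into $G^{Y[0]}_{0,(\cdot,n-1)}$, from which the increment equalities drop out immediately; this is more elementary and self-contained.

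The obstacle you flag is the right one, but it is a bit more than cosmetic bookkeeping under the paper's literal convention. With $h_0=Y_{(0,0)}$ and $h_{-1}=-Y_{(-1,0)}$, the nested cumulant telescopes to $h^{I[n-1]}_i=G^{Y[0]}_{0,(i,n-1)}-G^{Y[0]}_{0,(-1,n-1)}$ only for $i\ge0$; for $i<0$ one gets $G^{Y[0]}_{0,(i-1,n-1)}-G^{Y[0]}_{0,(-1,n-1)}$, an index shift rather than a sign issue, so your displayed identity fails uniformly and the two maxima do \emph{not} differ by a single constant as written. The clean fix is to renormalize the cumulant so that $h_i-h_{i-1}$ equals the boundary weight at column $i$ for \emph{every} $i\in\mathbb Z$; this alters each last-passage value only by a global additive constant, hence leaves all increments intact, and after this renormalization your constant-shift identity holds verbatim. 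The paper's recursion route is insensitive to this convention, which is presumably why it was preferred despite importing an external lemma; your route is shorter and more transparent once the cumulant is normalized, but is sensitive to exactly this point.
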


\begin{proof}
The proof follows from Lemma A.4 in \cite{Fan-Sep-20}. For the setup, our $I[n-1]$ and $\{Y_{(j,n)}\}_{j\in \mathbb{Z}^2}$ play the roles of $I$ and $\omega$ from \cite[Lemma A.4]{Fan-Sep-20}. The horizontal increments $\{G^{Y[0]}_{\mathbf 0, (j, n)} - G^{Y[0]}_{\mathbf 0,(j-1, n)}\}_{j\in \mathbb{Z}}$ and vertical increments $\{G^{Y[0]}_{\mathbf 0, (j, n)} - G^{Y[0]}_{\mathbf 0,(j, n-1)}\}_{j\in \mathbb{Z}}$  play the roles of $\wt{I}$ and $J$ of \cite[Lemma A.4]{Fan-Sep-20}. For the rest of the proof, we will be adapting this notation with $I, \omega, \wt{I}$ and $J$ from \cite[Lemma A.4]{Fan-Sep-20}.

We verify the three assumptions in \cite[Lemma A.4]{Fan-Sep-20}. Assumption \cite[(A.14)]{Fan-Sep-20} is used to guarantee that the last-passage values are well defined, which we have assumed as part of the statement of our proposition. Assumption \cite[(A.16)]{Fan-Sep-20} requires $J_j = \omega_j$ for infinitely many $j$'s. The equality holds whenever  the geodesic between $(0,0)$ and $(j, n)$ goes through $(j, n-1)$. This happens from the fact that our geodesics have finite length by assumption, and any sub-path of a geodesic is also a geodesic. Lastly, assumption \cite[(A.15)]{Fan-Sep-20} states that  
$$\wt{I}_j = \omega_j + (I_j - J_{j-1})^+ \qquad J_j = \omega_j + (I_j - J_{j-1})^- \qquad \text{ for each }j\in \mathbb{Z}.$$
This is well-known for the increments of the last-passage value. To see this, by definition we have 
\begin{equation}\label{CGM_id}
G^{Y[0]}_{\mathbf 0, \mathbf x} = \omega_\mathbf x + G^{Y[0]}_{\mathbf 0, \mathbf x-\mathbf e_1} \vee G^{Y[0]}_{\mathbf 0, \mathbf x-\mathbf e_2},
\end{equation}
subtracting both sides by $G^{Y[0]}_{\mathbf 0, \mathbf x-\mathbf e_1}$ we obtain $\wt{I}_j = \omega_j + (I_j - J_{j-1})^+$. The other statement follows from subtracting both sides of \eqref{CGM_id} by $G^{Y[0]}_{\mathbf 0, \mathbf x-\mathbf e_2}$.

Once these are verified, Lemma A.4 of \cite{Fan-Sep-20} states that the increments $\wt I$ and $J$, obtained from $G^{Y[0]}_{\mathbf 0, \bbullet}$ must be equal to the increments from $G^{I[n-1]} _{(0, n-1), \bbullet}$, which are the inter-departure and the sojourn times for a queue by definition. 
\end{proof}
 
Now we are ready to prove Proposition \ref{D_to_LPP} and Proposition \ref{S_to_LPP}.
\begin{proof}[Proof of Proposition \ref{D_to_LPP}]
We argue by induction. The base case when $n=1$ holds by definition. Then, suppose the statement holds for $n-1$, i.e.
$$D^{(n-1)}(Y[0], \mydots, Y[n-1]) = I[n-1].$$
From the induction hypothesis and definition \eqref{Dn}, 
$$D^{(n)}(Y[0], \mydots, Y[n]) = \big(G^{I[n-1]} _{(0,n-1), (j,n)} - G^{I[n-1]}_{(0,n-1), (j-1,n)}\big)_{j\in \mathbb{Z}}.$$
Finally, Proposition \ref{A_nest} states that the right hand side above is equal to $I[n]$ 
. And we have completed the proof.
\end{proof}

\begin{proof}[Proof of Proposition \ref{S_to_LPP}]
Using Proposition \ref{D_to_LPP}, we have
$$S(D^{(n-1)}(Y[0], \mydots, Y[n-1]), Y[n]) = \big(G^{I[n-1]} _{(0,n-1), (j,n)} - G^{I[n-1]} _{(0,n-1), (j,n-1)}\big)_{j\in \mathbb{Z}}$$
Then, applying Proposition \ref{A_nest} to the right side above gives us the desired result. 
\end{proof}

Because of Proposition \ref{D_to_LPP} and Proposition \ref{S_to_LPP}, we can now avoid explicitly distinguishing between the notations used for queues and LPP from a horizontal boundary.

Next, we state a queueing identity. Fix $k\geq 1$, and let $G^{Y[-k]} _{(0, -k), \bbullet}$ be the last-passage value starting from $(0, -k)$ with boundary $Y[-k]$.
The proposition below states that for any $k\geq 1$, if we replace the environment $Y[0]$ and $Y[1]$ by $\wc Y[0]$ and $I[1]$, then the last-passage increments of $G^{Y[-k]} _{(0, -k), \bbullet}$ at level $n \geq 1$ does not change. This result directly follows from Lemma 4.4 of \cite{Fan-Sep-20}.
\begin{proposition}\label{2level}
Fix $k, n \geq 1$, it holds that
$$D^{(n+k)}(Y[-k], \mydots, Y[0], Y[1], \mydots, Y[n]) = D^{(n+k)}(Y[-k], \mydots, \wc Y[0], I[1], \mydots, Y[n]).$$
\end{proposition}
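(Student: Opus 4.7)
The strategy is to collapse this multi-level queueing identity into a two-level statement and then invoke Lemma 4.4 of \cite{Fan-Sep-20}. Since the prefix $Y[-k], \ldots, Y[-1]$ is common to both sides, I first set $A := D^{(k-1)}(Y[-k], \ldots, Y[-1])$ (with the convention that $D^{(0)}$ is the identity when $k=1$) and use the iterative definition \eqref{Dn} to rewrite both sides of the claimed identity as $D^{(n+1)}(A, Y[0], Y[1], Y[2], \ldots, Y[n])$ and $D^{(n+1)}(A, \wc Y[0], I[1], Y[2], \ldots, Y[n])$ respectively. Next, suppose the two-level identity
$$D^{(2)}(A, Y[0], Y[1]) = D^{(2)}(A, \wc Y[0], I[1])$$
holds. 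Letting $B$ denote this common departure process, the remaining $n-1$ iterations $D^{(1)}(\cdot, Y[j])$ for $j = 2, \ldots, n$ act on the same input $B$ on both sides, and therefore produce identical outputs at level $n$. So it suffices to verify the two-level identity.

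This two-level identity is the content of Lemma 4.4 of \cite{Fan-Sep-20}. Intuitively, it is a Burke-type queueing duality: although the single-step departure processes $D^{(1)}(A, Y[0])$ and $D^{(1)}(A, \wc Y[0])$ do not coincide in general, feeding them into a second queue with service times $Y[1]$ and $I[1] = D^{(1)}(Y[0], Y[1])$ respectively yields the same departure process. The subtle point, and the heart of the cited argument, is that $I[1]$ is constructed from the original pair $(Y[0], Y[1])$ alone and has no explicit dependence on the external arrivals $A$, yet it is exactly the right compensating service time at station $1$ once $Y[0]$ has been replaced by $\wc Y[0]$ at station $0$.

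The main obstacle is precisely this two-level identity, which is a nontrivial variational statement about infinite queues and rests on the variational representation of departure times developed in \cite{Fan-Sep-20}. Since we may cite their Lemma 4.4 directly, no further work is required beyond the elementary prefix/suffix reductions above.
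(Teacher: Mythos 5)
Your proposal is correct and follows essentially the same route as the paper: peel off the common prefix $D^{(k-1)}(Y[-k],\ldots,Y[-1])$, apply the two-level exchange identity $D^{(1)}(D^{(1)}(A[-1],A[0]),A[1]) = D^{(1)}(D^{(1)}(A[-1],\wc A[0]), D^{(1)}(A[0],A[1]))$ from Fan--Sepp\"al\"ainen to the middle pair of levels (using $I[1]=D^{(1)}(Y[0],Y[1])$), and note that the remaining suffix operations act identically on identical inputs. The only cosmetic difference is the citation label (the paper's proof invokes Lemma 4.5 of \cite{Fan-Sep-20} for this identity, while its surrounding text says Lemma 4.4), which does not affect the validity of your argument.
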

\begin{proof}
Lemma 4.5 from \cite{Fan-Sep-20} states that for arbitrary weights $A[-1], A[0], A[1]$ such that as long as the queueing operations 
are well-defined, it holds that
\begin{equation}\label{fsid}
D^{(1)}(D^{(1)}(A[-1], A[0]), A[1]) = D^{(1)}(D^{(1)}(A[-1], \wc A [0]), D^{(1)}(A[0], A[1])).
\end{equation}
Then, in the calculation below, the second equality is obtained by letting
$$A[-1] = D^{(k-1)}(Y[-k], \mydots, Y[-1]), \qquad A[0] = Y[0], \qquad A[1] = Y[1]$$ and applying the identity \eqref{fsid}. The third equality holds since $D^{(1)}(Y[0], Y[1]) = I[1]$ by definition. We have 
\begin{align*}
&D^{(n+k)}(Y[-k], \mydots, Y[0], Y[1], \mydots, Y[n])\\
&= D^{(1)}(D^{(1)}(\mydots D^{(1)}(D^{(1)}(D^{(k-1)}(Y[-k], \mydots Y[-1]), Y[0]), Y[1]), \mydots), Y[n])\\
&= D^{(1)}(D^{(1)}(\mydots D^{(1)}(D^{(1)}(D^{(k-1)}(Y[-k], \mydots, Y[-1]), \wc Y[0]), D^{(1)}(Y[0], Y[1])), \mydots), Y[n])\\
&= D^{(1)}(D^{(1)}(\mydots D^{(1)}(D^{(1)}(D^{(k-1)}(Y[-k], \mydots, Y[-1]), \wc Y[0]), I[1]), \mydots), Y[n])\\
& =  D^{(n+k)}(Y[-k], \mydots, \wc Y[0], I[1], \mydots, Y[n]),
\end{align*}
and we have finished the proof.
\end{proof}

In the final part of this section, we introduce random variables into the discussion.
Let us define an infinite sequence $\{s_m\}_{m\in \mathbb{Z}_{\geq 0}}$ with $0< s_0< s_1 \leq  s_2\leq \mydots  \leq 1$.
For $m \in \mathbb{Z}_{\geq 0}$, let the entries of $Y[m]$ be a sequence of i.i.d~$\Exp(s_m)$ random variables. Since $s_0 < s_m$ for every $m \geq 1$, according to Lemma A.1 of \cite{Fan-Sep-20}, the last-passage value $G^{Y[0]}_{\mathbf 0,\bbullet}$ and its unique geodesic are almost surely finite. Thus, the increments and the dual weights from \eqref{def_inc} are well defined.  
Next, we state the following proposition about the independent properties for the increments and the dual weights in \eqref{def_inc}. 

\begin{figure}[t]

\begin{center}

\tikzset{every picture/.style={line width=0.75pt}} 

\begin{tikzpicture}[x=0.75pt,y=0.75pt,yscale=-1,xscale=1]

\draw  [color={rgb, 255:red, 74; green, 144; blue, 226 }  ,draw opacity=1 ][line width=1.5]  (76.5,102.9) -- (183,102.9) -- (183,129.6) ;
\draw   (183,129.6) -- (298.8,129.6) -- (298.8,181.9) ;
\draw   (298.8,181.9) -- (349.93,181.9) -- (349.93,200.75) ;
\draw  [color={rgb, 255:red, 155; green, 155; blue, 155 }  ,draw opacity=1 ][dash pattern={on 2.53pt off 3.02pt}][line width=2.25]  (77,129.6) -- (298.8,129.6) -- (298.8,181.9) ;
\draw  [color={rgb, 255:red, 155; green, 155; blue, 155 }  ,draw opacity=1 ][dash pattern={on 2.53pt off 3.02pt}][line width=2.25]  (298.8,181.9) -- (349.93,181.9) -- (349.93,200.25) ;
\draw  [fill={rgb, 255:red, 0; green, 0; blue, 0 }  ,fill opacity=1 ] (180.58,102.9) .. controls (180.58,101.56) and (181.66,100.48) .. (183,100.48) .. controls (184.34,100.48) and (185.42,101.56) .. (185.42,102.9) .. controls (185.42,104.24) and (184.34,105.33) .. (183,105.33) .. controls (181.66,105.33) and (180.58,104.24) .. (180.58,102.9) -- cycle ;
\draw    (349.93,200.25) -- (429.5,200.01) ;
\draw [shift={(431.5,200.01)}, rotate = 179.83] [color={rgb, 255:red, 0; green, 0; blue, 0 }  ][line width=0.75]    (10.93,-3.29) .. controls (6.95,-1.4) and (3.31,-0.3) .. (0,0) .. controls (3.31,0.3) and (6.95,1.4) .. (10.93,3.29)   ;
\draw [color={rgb, 255:red, 155; green, 155; blue, 155 }  ,draw opacity=1 ][line width=2.25]  [dash pattern={on 2.53pt off 3.02pt}]  (349.93,200.25) -- (408.5,200.04) ;
\draw [shift={(412.5,200.03)}, rotate = 179.79] [color={rgb, 255:red, 155; green, 155; blue, 155 }  ,draw opacity=1 ][line width=2.25]    (17.49,-5.26) .. controls (11.12,-2.23) and (5.29,-0.48) .. (0,0) .. controls (5.29,0.48) and (11.12,2.23) .. (17.49,5.26)   ;

\draw (49.5,136.9) node [anchor=north west][inner sep=0.75pt]    {$\mathcal Y^{K-1}$};
\draw (53,87.4) node [anchor=north west][inner sep=0.75pt]    {$\mathcal Y^{K}$};
\draw (185,78.4) node [anchor=north west][inner sep=0.75pt]    {$(a^{*} ,K)$};

\end{tikzpicture}

\captionsetup{width=.8\linewidth}
\caption{An illustration for the induction 
in Proposition \ref{BK}. The value $a^*$ corresponds to the $x$-coordinate for which the path $\mathcal{Y}^K$ takes its first $-\mathbf  e_2$ step. As defined in \eqref{dr1}, the LPP increments along the blue segment and the dual weights below are measurable relative to the random environment situated to the right of the vertical line $x=a^*$.}\label{fig:bk}
\end{center}

\end{figure}
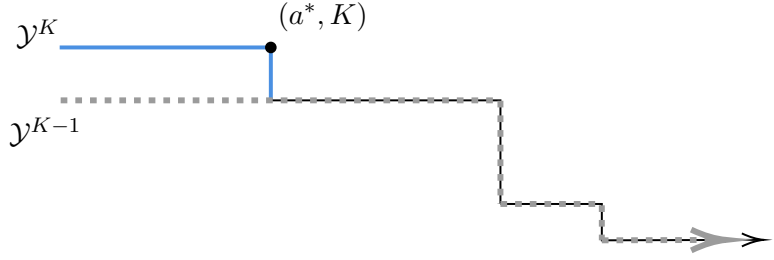
\begin{proposition}\label{BK}
Let $\mathcal{Y} = \{\mathbf y_i\}_{i\in \mathbb{Z}}$ be any down-right paths in $\mathbb{Z}\times \mathbb{Z}_{\geq 0}$ with $\mathbf y_{i+1}-\mathbf y_i\in \{\mathbf e_1, -\mathbf e_2\}$. Then, calculated from $G^{Y[0]}_{\mathbf 0,\bbullet}$, the collection of increments $I$, $J$ along $\mathcal{Y}$
and the dual weights $\wc Y$ below $\mathcal{Y}$
are mutually independent. 
\end{proposition}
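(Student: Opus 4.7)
The plan is to induct on $K = \max_{i \in \Z} (y_i \cdot e_2)$, the maximum height attained by the down-right path $\mathcal{Y}$. The main technical input is Proposition~\ref{2level}, which replaces the environment pair $(Y[m-1], Y[m])$ by $(\wc Y[m-1], I[m])$ without altering the LPP departure times at higher levels; combined with the classical Burke property for exponential M/M/1 queues, this guarantees that the swapped pair $(\wc Y[m-1], I[m])$ is itself a collection of jointly independent exponentials, independent also of the unchanged environment below.

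\textbf{Base case $K = 0$.} The path $\mathcal{Y}$ coincides with the $x$-axis, so the $I$-increments along $\mathcal{Y}$ reduce to the iid $\Exp(s_0)$ boundary weights $\{Y_{(j+1, 0)}\}_j$. There are no $J$-increments along $\mathcal{Y}$ and no dual weights below $\mathcal{Y}$ in the domain $\Z \times \Z_{\ge 0}$, so the claim is immediate.

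\textbf{Inductive step.} Suppose the proposition for maximum height $<K$, and let $\mathcal{Y}$ have maximum height $K \ge 1$. Let $a^*$ be the $x$-coordinate at which $\mathcal{Y}$ takes its first $-e_2$ step leaving level $K$, as in Figure~\ref{fig:bk}, denote by $\mathcal{Y}^K$ the portion of $\mathcal{Y}$ at height $K$ together with the $-e_2$ step at $x = a^*$, and let $\mathcal{Y}'$ be the path obtained by replacing $\mathcal{Y}^K$ with a horizontal ray at height $K-1$ ending at $(a^*, K-1)$; then $\mathcal{Y}'$ is a down-right path of maximum height $\le K-1$. Apply Proposition~\ref{2level} iteratively, swapping the top-most pair of rows, to reduce the environment; after these Burke swaps, the random variables under consideration fall into two groups --- those associated with $\mathcal{Y}^K$ (the $I$-, $J$-increments along $\mathcal{Y}^K$ and the dual weights immediately below it), and those associated with $\mathcal{Y}'$ (the $I$-, $J$-increments along $\mathcal{Y}'$ and the dual weights below $\mathcal{Y}'$). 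The effect of the swaps is to express these two groups as measurable functions of disjoint portions of a single product-exponential environment, hence they are jointly independent. Within each group, independence follows either directly from the Burke output (for the $\mathcal{Y}^K$-group) or from the inductive hypothesis applied to $\mathcal{Y}'$ in the modified environment, in which $\wc Y[K-1]$ now plays the role of the top-level iid exponential boundary (for the $\mathcal{Y}'$-group).

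\textbf{Main obstacle.} The principal technical difficulty is the careful measurability bookkeeping in the inductive step: verifying that after the Burke swaps, the data associated with $\mathcal{Y}^K$ and with $\mathcal{Y}'$ depend on truly disjoint portions of the post-swap environment, so that the two groups are genuinely independent rather than merely uncorrelated. A mild limiting argument is also required, since Proposition~\ref{2level} is formulated with a finite depth $k$ while $\mathcal{Y}$ extends doubly infinitely in the horizontal direction; one approximates by truncating and passes to the limit using that the iid exponential distributions involved are stable under the relevant marginalizations.
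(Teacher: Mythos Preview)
Your inductive skeleton---induct on the maximum height $K$ and peel off the top row via Burke---is exactly the paper's. But you have reached for the wrong tool. Proposition~\ref{2level} concerns an LPP whose boundary lies at a level \emph{strictly below} $0$ (it requires $k\ge1$) and asserts that its departures at higher levels are unaffected by swapping the pair $(Y[0],Y[1])$ for $(\wc Y[0],I[1])$. In Proposition~\ref{BK} there is only the single LPP $G^{Y[0]}_{0,\bbullet}$ with boundary on the bottom row, so Proposition~\ref{2level} has no outer LPP to act on, and ``iteratively swapping the top-most pair of rows'' has no well-defined meaning in this setting. The paper's actual mechanism is Proposition~\ref{A_nest}, which identifies the increments of $G^{Y[0]}_{0,\bbullet}$ between levels $K-1$ and $K$ with those of the one-step nested LPP $G^{I[K-1]}_{(0,K-1),\bbullet}$; the single-level Burke property (Lemma~B.2 of \cite{Fan-Sep-20}) is then applied to this nested LPP along the path $(-\infty,K)\to(a^*,K)\to(a^*,K-1)\to(\infty,K-1)$ to obtain the mutual independence of the quantities in \eqref{dr1}, together with their measurability with respect to $\sigma(\{I[K-1]_j,\,Y_{(j,K)}\}_{j\le a^*})$.

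A second, related error: you say that after the peel-off ``$\wc Y[K-1]$ plays the role of the top-level iid exponential boundary'' for the $\mathcal{Y}'$-group. No such replacement occurs. The inductive hypothesis is applied to $\mathcal{Y}'$ (the paper's $\mathcal{Y}^{K-1}$) in the \emph{original} environment with the original boundary $Y[0]$; the two groups decouple because the level-$K$ data are measurable functions of $\{I[K-1]_j,\,Y_{(j,K)}\}_{j\le a^*}$, and the inductive hypothesis already certifies that these are independent of the remaining increments along $\mathcal{Y}^{K-1}$ and the dual weights below it. With the correct tools in place, the limiting argument you flag is unnecessary: Proposition~\ref{A_nest} applies directly on the bi-infinite horizontal line, so no truncation in $j$ is needed.
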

\begin{proof}
This follows the standard ``corner flipping'' induction for the KPZ lattice models. Lemma B.2 of \cite{Fan-Sep-20} verified the base case when the down-right path $\mathcal{Y} = \mathcal{Y}^1$ is contained between the horizontal lines $y = 0$ and $y=1$. 

Next, we proceed with the induction step which is illustrated in Figure \ref{fig:bk}. Fix $K$, suppose we have a down-right path  $\mathcal{Y}^K = \{\mathbf y^K_i\}_{i \in \mathbb{Z}}$ that starts at $(-\infty, K)$. The path $\mathcal{Y}^K$ will take a ``$-\mathbf e_2$" step at points $(a^*, K)$  for some $a^* \in \mathbb{Z}\cup\{\infty\}$ (the case $a^* = \infty$ means $\mathcal{Y}^K$ is a horizontal line). 
The inductive hypothesis states that for each down-right path inside $\mathbb{Z}\times [0,K-1]$, the independence result holds. In particular, we will be using the down-right path $\mathcal{Y}^{K-1}$ which goes from $(-\infty, K-1)$ to $(a^*, K-1)$ then follows $\mathcal{Y}^K$. 

Let us define a nested LPP $G^{I[K-1]}_{(0, K-1), \bbullet}$ that has a horizontal boundary on the line $y = {K-1}$ with the boundary value is given by $I[K-1]$. First, we assume that $a^* < \infty$. 
Applying Lemma B.2 of \cite{Fan-Sep-20} to $G^{I[K-1]}_{(0, K-1), \bbullet}$ along the down-right path 
$(-\infty, K) \to (a^*, K) \to (a^*, K-1) \to (\infty, K-1)$, and subsequently making use of Proposition \ref{A_nest}, we see that 
\begin{equation} \label{dr1}
\Big\{G^{Y[0]}_{\mathbf 0, (j, K)} - G^{Y[0]}_{\mathbf 0, (j-1, K)}\Big\}_{j \leq a^*} \bigcup \Big\{\wc Y_{(j, K-1)}\Big\}_{j< a^*} \bigcup \Big\{G^{Y[0]}_{\mathbf 0, (a^*, K)} - G^{Y[0]}_{\mathbf 0, (a^*, K-1)} \Big\}\end{equation}
are mutually independent. In addition, since these random variables are measurable with respect to $\sigma(\{I[K-1]_j, Y_{(j, K)}\}_{j\leq a^*})$, together with the inductive hypothesis applied to $\mathcal{Y}^{K-1}$, we obtain the claimed independence in our proposition. 
Lastly, when considering the case $a^* = \infty$, the same argument applies, albeit with the exception that the vertical increment, which is the third term in \eqref{dr1}, is omitted.
\end{proof}

\section{Proof of Theorem \ref{main}}\label{pf_main1}

Again, we will present our arguments for the CGM, and the modification for the other models will be addressed in Section \ref{ig_poly} and Section \ref{more_models}.

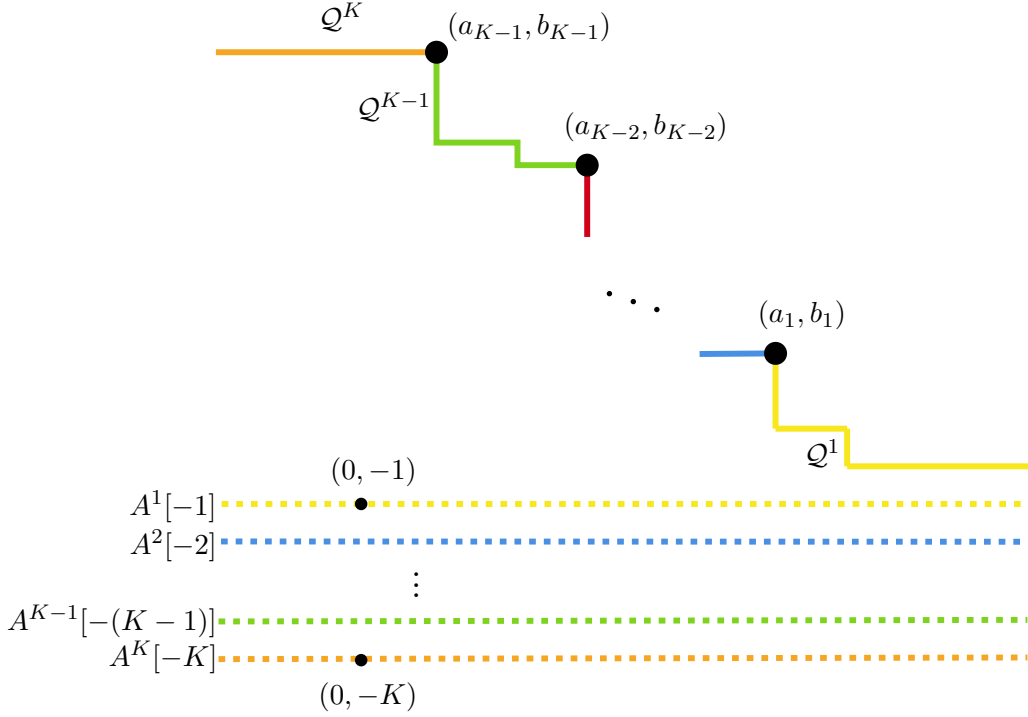
\begin{figure}[t]
\begin{center}

\tikzset{every picture/.style={line width=0.75pt}} 

\begin{tikzpicture}[x=0.75pt,y=0.75pt,yscale=-0.95,xscale=0.95]

\draw [color={rgb, 255:red, 245; green, 166; blue, 35 }  ,draw opacity=1 ][line width=2.25]    (119.9,40) -- (237,40) ;

\draw [color={rgb, 255:red, 126; green, 211; blue, 33 }  ,draw opacity=1 ][line width=2.25]    (237,40) -- (237,88) -- (280, 88) -- (280,100) --  (317,100);
\draw [color={rgb, 255:red, 208; green, 2; blue, 27 }  ,draw opacity=1 ][line width=2.25]    (317,100) -- (317,138.14) ;
\draw [color={rgb, 255:red, 248; green, 231; blue, 28 }  ,draw opacity=1 ][line width=2.25]    (417,200) -- (417,240) ;
\draw [color={rgb, 255:red, 248; green, 231; blue, 28 }  ,draw opacity=1 ][line width=2.25]    (417,240) -- (455,240) ;
\draw [color={rgb, 255:red, 248; green, 231; blue, 28 }  ,draw opacity=1 ][line width=2.25]    (455,240) -- (455,260) ;
\draw [color={rgb, 255:red, 248; green, 231; blue, 28 }  ,draw opacity=1 ][line width=2.25]    (455,260) -- (553.1,260) ;
\draw [color={rgb, 255:red, 74; green, 144; blue, 226 }  ,draw opacity=1 ][line width=2.25]    (376.67,200.33) -- (417,200) ;
\draw [color={rgb, 255:red, 248; green, 231; blue, 28 }  ,draw opacity=1 ][line width=2.25]  [dash pattern={on 2.53pt off 3.02pt}]  (122.94,280) -- (553.1,280) ;
\draw [color={rgb, 255:red, 74; green, 144; blue, 226 }  ,draw opacity=1 ][line width=2.25]  [dash pattern={on 2.53pt off 3.02pt}]  (122.61,299.92) -- (551.39,300.08) ;
\draw [color={rgb, 255:red, 126; green, 211; blue, 33 }  ,draw opacity=1 ][line width=2.25]  [dash pattern={on 2.53pt off 3.02pt}]  (121.28,342.43) -- (550.72,341.57) ;
\draw [color={rgb, 255:red, 245; green, 166; blue, 35 }  ,draw opacity=1 ][line width=2.25]  [dash pattern={on 2.53pt off 3.02pt}]  (121.28,362.43) -- (550.72,361.57) ;
\draw  [fill={rgb, 255:red, 0; green, 0; blue, 0 }  ,fill opacity=1 ] (194.21,280) .. controls (194.21,278.46) and (195.46,277.21) .. (197,277.21) .. controls (198.54,277.21) and (199.79,278.46) .. (199.79,280) .. controls (199.79,281.54) and (198.54,282.79) .. (197,282.79) .. controls (195.46,282.79) and (194.21,281.54) .. (194.21,280) -- cycle ;
\draw  [fill={rgb, 255:red, 0; green, 0; blue, 0 }  ,fill opacity=1 ] (311.42,100) .. controls (311.42,96.92) and (313.92,94.42) .. (317,94.42) .. controls (320.08,94.42) and (322.58,96.92) .. (322.58,100) .. controls (322.58,103.08) and (320.08,105.58) .. (317,105.58) .. controls (313.92,105.58) and (311.42,103.08) .. (311.42,100) -- cycle ;
\draw  [fill={rgb, 255:red, 0; green, 0; blue, 0 }  ,fill opacity=1 ] (231.42,40) .. controls (231.42,36.92) and (233.92,34.42) .. (237,34.42) .. controls (240.08,34.42) and (242.58,36.92) .. (242.58,40) .. controls (242.58,43.08) and (240.08,45.58) .. (237,45.58) .. controls (233.92,45.58) and (231.42,43.08) .. (231.42,40) -- cycle ;
\draw  [fill={rgb, 255:red, 0; green, 0; blue, 0 }  ,fill opacity=1 ] (411.42,200) .. controls (411.42,196.92) and (413.92,194.42) .. (417,194.42) .. controls (420.08,194.42) and (422.58,196.92) .. (422.58,200) .. controls (422.58,203.08) and (420.08,205.58) .. (417,205.58) .. controls (413.92,205.58) and (411.42,203.08) .. (411.42,200) -- cycle ;
\draw  [fill={rgb, 255:red, 0; green, 0; blue, 0 }  ,fill opacity=1 ] (194.21,363) .. controls (194.21,361.46) and (195.46,360.21) .. (197,360.21) .. controls (198.54,360.21) and (199.79,361.46) .. (199.79,363) .. controls (199.79,364.54) and (198.54,365.79) .. (197,365.79) .. controls (195.46,365.79) and (194.21,364.54) .. (194.21,363) -- cycle ;

\draw (172.93,12.2) node [anchor=north west][inner sep=0.75pt]    {$\mathcal{Q}^K$};
\draw (193,60) node [anchor=north west][inner sep=0.75pt]    {$\mathcal{Q}^{K-1}$};
\draw (430.73,244.2) node [anchor=north west][inner sep=0.75pt]    {$\mathcal{Q}^1$};
\draw (179,253.4) node [anchor=north west][inner sep=0.75pt]    {$(0,-1)$};
\draw (72.5,271.4) node [anchor=north west][inner sep=0.75pt]    {$A^{1}[ -1]$};
\draw (72.67,292.07) node [anchor=north west][inner sep=0.75pt]    {$A^{2}[ -2]$};
\draw (61.5,351.07) node [anchor=north west][inner sep=0.75pt]    {$A^{K}[ -K]$};
\draw (241,17.4) node [anchor=north west][inner sep=0.75pt]    {$( a_{K-1} ,b_{K-1})$};
\draw (303,70.4) node [anchor=north west][inner sep=0.75pt]    {$( a_{K-2} ,b_{K-2})$};
\draw (406,170.4) node [anchor=north west][inner sep=0.75pt]    {$( a_{1} ,b_{1})$};
\draw (172.8,373) node [anchor=north west][inner sep=0.75pt]    {$( 0,-K)$};
\draw (222,306.4) node [anchor=north west][inner sep=0.75pt]  [font=\Large]  {$\vdots $};
\draw (321,155.4) node [anchor=north west][inner sep=0.75pt]  [font=\Huge]  {$\ddots$};
\draw (7.5,332.07) node [anchor=north west][inner sep=0.75pt]    {$A^{K-1}[-(K-1)]$};

\end{tikzpicture}

\captionsetup{width=.8\linewidth}
\caption{Setup for the last-passage values $\big\{G^{A^k[-k]}_{(0,-k), \bullet}\big\}_{k=1}^K$ and the collection of increments $\{\mathcal{Q}^k\}_{k=1}^K$. The collection of horizontal boundaries $\{A^k[-k]\}_{k=1}^K$ are shown as dotted lines on the bottom.}\label{cgm_setup}
\end{center}
\end{figure}
Firstly, we will  demonstrate an independence result for the increments from a LPP that starts with a horizontal boundary. We now describe the setup, and this is illustrated in Figure \ref{cgm_setup}.
Fix $K\in \mathbb{Z}_{>0}$, and $0<\rho_1<\rho_2 <  \mydots < \rho_K<1$. For $k =  1, {\mydots}, K$, let $\{A^k[-k]_j\}_{j\in \mathbb{Z}}$ be a collection of i.i.d.~$\Exp(1-\rho_k)$ random variables, where the weight $A^k[-k]_j$ is attached with the vertex $(j, -k)$. Define $A^k[-k] = (A^k[-k]_j)_{j\in \mathbb{Z}}$, and we assume $A^1[-1], {\mydots}, A^K[-K]$ are all independent. Above the $x$-axis, let $\{\omega_\mathbf z\}_{\mathbf z\in \mathbb{Z}\times \mathbb{Z}_{\geq 0}}$ be a collection of i.i.d~$\Exp(1)$ random variables attached to the vertices. Furthermore, for $n\in \mathbb{Z}_{\geq 0}$, let us define $\omega[n] = (\omega_{(j, n)})_{j\in \mathbb{Z}}$ which are the weights on the horizontal line $y=n$. 
For $k = 1, {\mydots}, K$, let  $G^{A^k[-k]} _{(0, -k), \bbullet}$ to be the last-passage value with the horizontal boundary $A^k[-k]$.

Referring to Figure \ref{cgm_setup}, 
fix a down-right path $\mathcal{Y} = \{\mathbf y_i\}_{i\in \mathbb{Z}}$ with $\mathbf y_{i+1} - \mathbf y_i \in \{\mathbf e_1, -\mathbf e_2\}$  inside $\mathbb{Z} \times \mathbb{Z}_{\geq 0}$. 
Fix a sequence $i_1> i_2 > \mydots > i_{K-1}$, define the collections of increments
\begin{align}
&\mathcal{Q}^1 = \Big\{G^{A^1[-1]} _{(0,-1),\mathbf y_{i+1}} -  G^{A^1[-1]} _{(0,-1),\mathbf y_{i}} :  i_1 \leq  i < \infty \Big\} 
\nonumber\\
&\mathcal{Q}^k = \Big\{G^{A^k[-k]} _{(0,-k),\mathbf y_{i+1}} - G^{A^k[-k]} _{(0,-k),\mathbf y_{i}} : i_{k}\leq  i < i_{k-1} \Big\} \quad \text{ for }k \in \{2, \mydots, K-1\}\label{def_Qk}\\
&\mathcal{Q}^K = \Big\{G^{A^K[-K]} _{(0,-K),\mathbf y_{i+1}} -  G^{A^K[-K]} _{(0,-K),\mathbf y_{i}}: -\infty<  i  < i_{K-1} \Big\}. \nonumber
\end{align}

Then, we have the following independence result. 
\begin{theorem}\label{ind_B}
The collection of random variables $\mathcal{Q}^1 \cup \mathcal{Q}^2 \cup {\mydots} \cup \mathcal{Q}^K$ are mutually independent. 
\end{theorem}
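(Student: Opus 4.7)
The plan is to proceed by induction on $K$. For the base case $K=1$, I apply Proposition \ref{BK} directly to the single LPP system $G^{A^1[-1]}_{(0,-1),\bullet}$ along the down-right path $\mathcal{Y}$, which immediately yields the mutual independence of $\mathcal{Q}^1$.

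For the inductive step, assume the theorem for $K-1$ and aim to decouple $\mathcal{Q}^K$ from the remaining collections. The key structural observation is that the bulk row $\omega[-(K-1)]$ is accessed only by system $K$; systems $1,\ldots,K-1$ use only rows $y \geq -(K-2)$. Exploiting this asymmetry, I apply Proposition \ref{2level} to system $K$'s queueing cascade, substituting the pair $(A^K[-K], \omega[-(K-1)])$ with $(\wc{A}^K[-K], I^K[-(K-1)])$, where $I^K[-(K-1)] := D^{(1)}(A^K[-K], \omega[-(K-1)])$. By Proposition \ref{A_nest}, system $K$'s LPP on levels $\geq -(K-1)$ is preserved as the nested LPP $G^{I^K[-(K-1)]}_{(0,-(K-1)),\bullet}$, so $\mathcal{Q}^K$ is unchanged, while the substitution does not affect systems $1,\ldots,K-1$. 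By the Burke property embedded in Proposition \ref{BK}, the new input $I^K[-(K-1)]$ is i.i.d.\ $\Exp(1-\rho_K)$ and is jointly independent of $\wc{A}^K[-K]$, the remaining boundaries $\{A^j[-j]\}_{j<K}$, and the bulk $\{\omega[m]\}_{m\geq -(K-2)}$.

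Next, apply Proposition \ref{BK} to the nested LPP $G^{I^K[-(K-1)]}_{(0,-(K-1)),\bullet}$ along $\mathcal{Y}$. This yields mutual independence of (i) all increments of this LPP along $\mathcal{Y}$ (in particular, within $\mathcal{Q}^K$), and (ii) the dual weights $\wc{W}$ of this LPP at vertices below $\mathcal{Y}$. The plan is then to argue that $\wc{W}$, together with the independent pieces $(\{A^j[-j]\}_{j<K}, I^K[-(K-1)], \{\omega[m]\}_{m > \text{top of }\mathcal{Y}})$, generates a $\sigma$-algebra that (a) is independent of $\mathcal{Q}^K$ by Proposition \ref{BK}, and (b) contains all the information needed to recover $\mathcal{Q}^{K-1}\cup\cdots\cup\mathcal{Q}^1$. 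Once both are established, the inductive hypothesis applied to $\mathcal{Q}^{K-1},\ldots,\mathcal{Q}^1$ finishes the argument.

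The main obstacle is establishing the measurability claim (b). Concretely, one must track the LPP change-of-variables bijection carefully: the bulk $\{\omega[m]\}_{m\geq -(K-2)}$ used by systems $1,\ldots,K-1$ is reparametrized by $(\wc{W}, I^K[-(K-1)], \text{increments along }\mathcal{Y})$, and one has to verify that the relevant sub-collection of $\omega$'s — precisely those entering the LPPs for systems $j<K$ — depends only on the $\wc{W}$-piece (and on $I^K[-(K-1)]$), not on the increments along $\mathcal{Y}$ that form $\mathcal{Q}^K$. This hinges on the geometric fact that weights consumed by systems $j<K$ lie in regions of the environment whose reparametrized image, under the bijection attached to system $K$'s nested LPP, is determined purely by the dual part. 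Verifying this bookkeeping is where the bulk of the technical work lives; everything else follows from routine applications of Propositions \ref{BK}, \ref{A_nest}, and \ref{2level}.
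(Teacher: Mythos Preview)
There is a genuine gap, and it stems from a misreading of the setup. In the configuration of Theorem~\ref{ind_B}, the bulk weights $\omega[m]$ exist only for $m\ge 0$; the rows at levels $-1,\dots,-K$ carry the boundary sequences $A^{1}[-1],\dots,A^{K}[-K]$. In particular, there is no row $\omega[-(K-1)]$. The queueing cascade for system $K$ reads
\[
D^{(n+K)}\bigl(A^{K}[-K],\,A^{K-1}[-(K-1)],\,\dots,\,A^{1}[-1],\,\omega[0],\dots,\omega[n]\bigr),
\]
so the row immediately above $A^{K}[-K]$ is $A^{K-1}[-(K-1)]$, and that row is \emph{not} exclusive to system $K$: it is precisely the boundary of system $K-1$. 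Consequently, your ``new input'' is
\[
I^{K}[-(K-1)]\;=\;D^{(1)}\bigl(A^{K}[-K],\,A^{K-1}[-(K-1)]\bigr),
\]
which is a function of $A^{K-1}[-(K-1)]$ and therefore cannot be independent of the collection $\{A^{j}[-j]\}_{j<K}$. The Burke property only gives independence between $(I^{K}[-(K-1)]_{j})_{j\le j^*}$ and the service times $(A^{K-1}[-(K-1)]_{j})_{j>j^*}$ on opposite sides of a cut; the full departure sequence and the full service sequence are never independent. This breaks both the independence claim in your second paragraph and the measurability claim (b): once you try to recover $\mathcal{Q}^{K-1}$ you need $A^{K-1}[-(K-1)]$ itself, and there is no way to extract it from the data you have declared independent of $\mathcal{Q}^{K}$.

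The paper's argument runs in the opposite direction: it peels off $\mathcal{Q}^{1}$ first. The point is that $A^{1}[-1]$ is the \emph{top} boundary row and hence appears in the cascade of \emph{every} system $k\ge 1$. One then applies Proposition~\ref{2level} repeatedly (to the pairs of levels $\{-1,0\},\{0,1\},\dots,\{b_{1}-1,b_{1}\}$) inside each cascade for $k\ge 2$, which pushes $A^{1}$ up to level $b_{1}$ and replaces the rows below by dual weights $\wc\omega^{1}[\cdot]$. A single application of Proposition~\ref{BK} to system $1$ along the path $(-\infty,b_{1})\to(a_{1},b_{1})\to\{y_{i}\}_{i\ge i_{1}}$ then shows that $\mathcal{Q}^{1}$ is mutually independent and independent of $(A^{1}[b_{1}]_{j})_{j\le a_{1}}$ together with the dual weights---exactly the inputs that now determine $\mathcal{Q}^{2},\dots,\mathcal{Q}^{K}$. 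The induction then repeats this for $\mathcal{Q}^{2}$ working only inside that smaller $\sigma$-algebra. The asymmetry you tried to exploit at the bottom does not exist; the usable asymmetry is at the top, where the shared row $A^{1}[-1]$ can be moved simultaneously through all the cascades via Proposition~\ref{2level}.
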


\begin{proof}
To simplify the notation, let $\mathbf y_{i_k} = (a_k, b_k)$ for $k = 1, \mydots, K-1$.
 And for the rest of the proof, we will adapt the notation from Section \ref{q_basic}, for $k \in \{1, {\mydots}, K\}$, $n \in \mathbb{Z}_{\geq 0}$ and $j \in \mathbb{Z}$, denote
$$D^{(n+k)}(A^k[-k], {\mydots}, A^1[-1], \omega, {\mydots}, \omega[n]) = \big(G^{A^k[-k]} _{(0, -k), (j, n)}- G^{A^k[-k]} _{(0, -k), (j-1, n)}\big)_{j \in \mathbb{Z}}.$$

First, we will show that random variables inside $\mathcal{Q}^1$ are independent, furthermore, they are independent with $\sigma(\mathcal{Q}^2, \mathcal{Q}^3, {\mydots}, \mathcal{Q}^K)$. 
To do this, note that $\mathcal{Q}^2, {\mydots}, \mathcal{Q}^K$ can be viewed as functions of
\begin{equation}\label{step1_before}
\Big\{D^{(n+k)}(A^k[-k], {\mydots}, A^1[-1], \omega[0], \omega[1], {\mydots}, \omega[n])_j, \omega_{(j, n+1)}\Big\}_{j \leq a_1, k \in \{2, {\mydots} K\}, n \geq b_1}.
\end{equation}
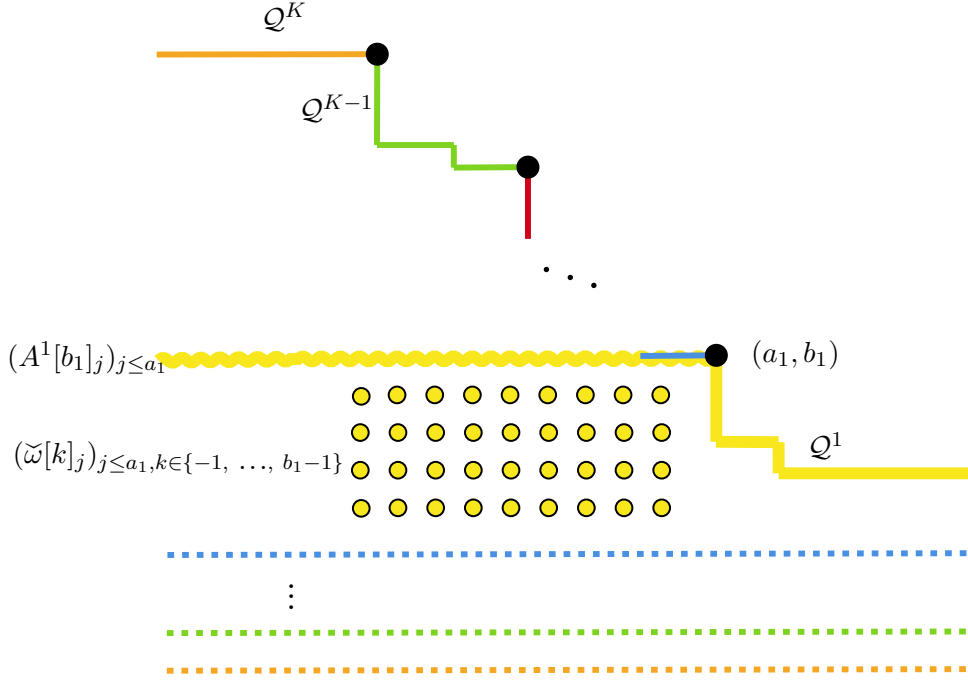
\begin{figure}
\begin{center}

\tikzset{every picture/.style={line width=0.75pt}} 

\begin{tikzpicture}[x=0.75pt,y=0.75pt,yscale=-0.95,xscale=0.95]

\draw [color={rgb, 255:red, 248; green, 231; blue, 28 }  ,draw opacity=1 ][line width=4.5]    (430,234) -- (430,278.24) ;
\draw [color={rgb, 255:red, 248; green, 231; blue, 28 }  ,draw opacity=1 ][line width=4.5]    (463.07,295) -- (566.1,295) ;
\draw [color={rgb, 255:red, 248; green, 231; blue, 28 }  ,draw opacity=1 ][line width=4.5]    (133.89,235.02) .. controls (135.54,233.33) and (137.21,233.32) .. (138.89,234.97) .. controls (140.57,236.62) and (142.24,236.6) .. (143.89,234.92) .. controls (145.54,233.23) and (147.2,233.21) .. (148.89,234.86) .. controls (150.57,236.51) and (152.24,236.49) .. (153.89,234.81) .. controls (155.54,233.13) and (157.21,233.11) .. (158.89,234.76) .. controls (160.57,236.41) and (162.24,236.39) .. (163.89,234.71) .. controls (165.54,233.03) and (167.21,233.01) .. (168.89,234.66) .. controls (170.57,236.31) and (172.24,236.29) .. (173.89,234.61) .. controls (175.54,232.93) and (177.21,232.91) .. (178.89,234.56) .. controls (180.57,236.21) and (182.24,236.19) .. (183.89,234.51) .. controls (185.54,232.83) and (187.21,232.81) .. (188.89,234.46) .. controls (190.57,236.11) and (192.24,236.09) .. (193.89,234.41) .. controls (195.54,232.73) and (197.21,232.71) .. (198.89,234.36) .. controls (200.57,236.01) and (202.24,235.99) .. (203.89,234.31) -- (207.95,234.27) -- (207.95,234.27) .. controls (209.62,232.6) and (211.28,232.59) .. (212.95,234.26) .. controls (214.62,235.92) and (216.28,235.92) .. (217.95,234.25) .. controls (219.62,232.58) and (221.28,232.58) .. (222.95,234.25) .. controls (224.62,235.91) and (226.28,235.91) .. (227.95,234.24) .. controls (229.62,232.57) and (231.28,232.57) .. (232.95,234.24) .. controls (234.62,235.9) and (236.28,235.9) .. (237.95,234.23) .. controls (239.62,232.56) and (241.28,232.56) .. (242.95,234.22) .. controls (244.62,235.89) and (246.28,235.89) .. (247.95,234.22) .. controls (249.62,232.55) and (251.28,232.55) .. (252.95,234.21) .. controls (254.62,235.88) and (256.28,235.88) .. (257.95,234.21) .. controls (259.62,232.54) and (261.28,232.54) .. (262.95,234.2) .. controls (264.62,235.86) and (266.28,235.86) .. (267.95,234.19) .. controls (269.62,232.52) and (271.28,232.52) .. (272.95,234.19) .. controls (274.62,235.85) and (276.28,235.85) .. (277.95,234.18) .. controls (279.62,232.51) and (281.28,232.51) .. (282.95,234.18) .. controls (284.62,235.84) and (286.28,235.84) .. (287.95,234.17) .. controls (289.62,232.5) and (291.28,232.5) .. (292.95,234.16) .. controls (294.62,235.83) and (296.28,235.83) .. (297.95,234.16) .. controls (299.62,232.49) and (301.28,232.49) .. (302.95,234.15) .. controls (304.62,235.82) and (306.28,235.82) .. (307.95,234.15) .. controls (309.62,232.48) and (311.28,232.48) .. (312.95,234.14) .. controls (314.62,235.8) and (316.28,235.8) .. (317.95,234.13) .. controls (319.62,232.46) and (321.28,232.46) .. (322.95,234.13) .. controls (324.62,235.79) and (326.28,235.79) .. (327.95,234.12) .. controls (329.62,232.45) and (331.28,232.45) .. (332.95,234.12) .. controls (334.62,235.78) and (336.28,235.78) .. (337.95,234.11) .. controls (339.62,232.44) and (341.28,232.44) .. (342.95,234.1) .. controls (344.62,235.77) and (346.28,235.77) .. (347.95,234.1) .. controls (349.62,232.43) and (351.28,232.43) .. (352.95,234.09) .. controls (354.62,235.76) and (356.28,235.76) .. (357.95,234.09) .. controls (359.62,232.42) and (361.28,232.42) .. (362.95,234.08) .. controls (364.62,235.74) and (366.28,235.74) .. (367.95,234.07) .. controls (369.62,232.4) and (371.28,232.4) .. (372.95,234.07) .. controls (374.62,235.73) and (376.28,235.73) .. (377.95,234.06) .. controls (379.62,232.39) and (381.28,232.39) .. (382.95,234.06) .. controls (384.62,235.72) and (386.28,235.72) .. (387.95,234.05) .. controls (389.62,232.38) and (391.28,232.38) .. (392.95,234.04) .. controls (394.62,235.71) and (396.28,235.71) .. (397.95,234.04) .. controls (399.62,232.37) and (401.28,232.37) .. (402.95,234.03) .. controls (404.62,235.7) and (406.28,235.7) .. (407.95,234.03) .. controls (409.62,232.36) and (411.28,232.36) .. (412.95,234.02) .. controls (414.62,235.68) and (416.28,235.68) .. (417.95,234.01) .. controls (419.62,232.34) and (421.28,232.34) .. (422.95,234.01) .. controls (424.62,235.67) and (426.28,235.67) .. (427.95,234) -- (430,234) -- (430,234) ;
\draw [color={rgb, 255:red, 245; green, 166; blue, 35 }  ,draw opacity=1 ][line width=2.25]    (132.9,72.48) -- (250,72.31) ;
\draw [color={rgb, 255:red, 126; green, 211; blue, 33 }  ,draw opacity=1 ][fill={rgb, 255:red, 126; green, 211; blue, 33 }  ,fill opacity=1 ][line width=2.25]    (250,72.31) -- (249.84,120.58) ;
\draw [color={rgb, 255:red, 126; green, 211; blue, 33 }  ,draw opacity=1 ][line width=2.25]    (290.64,132.57) -- (330,132.31) ;
\draw [color={rgb, 255:red, 208; green, 2; blue, 27 }  ,draw opacity=1 ][line width=2.25]    (330,132.31) -- (330,170.45) ;
\draw [color={rgb, 255:red, 74; green, 144; blue, 226 }  ,draw opacity=1 ][line width=2.25]    (389.67,232.65) -- (430,232.31) ;
\draw  [fill={rgb, 255:red, 0; green, 0; blue, 0 }  ,fill opacity=1 ] (324.42,132.31) .. controls (324.42,129.23) and (326.92,126.73) .. (330,126.73) .. controls (333.08,126.73) and (335.58,129.23) .. (335.58,132.31) .. controls (335.58,135.4) and (333.08,137.9) .. (330,137.9) .. controls (326.92,137.9) and (324.42,135.4) .. (324.42,132.31) -- cycle ;
\draw  [fill={rgb, 255:red, 0; green, 0; blue, 0 }  ,fill opacity=1 ] (244.42,72.31) .. controls (244.42,69.23) and (246.92,66.73) .. (250,66.73) .. controls (253.08,66.73) and (255.58,69.23) .. (255.58,72.31) .. controls (255.58,75.4) and (253.08,77.9) .. (250,77.9) .. controls (246.92,77.9) and (244.42,75.4) .. (244.42,72.31) -- cycle ;
\draw  [fill={rgb, 255:red, 0; green, 0; blue, 0 }  ,fill opacity=1 ] (424.42,232.31) .. controls (424.42,229.23) and (426.92,226.73) .. (430,226.73) .. controls (433.08,226.73) and (435.58,229.23) .. (435.58,232.31) .. controls (435.58,235.4) and (433.08,237.9) .. (430,237.9) .. controls (426.92,237.9) and (424.42,235.4) .. (424.42,232.31) -- cycle ;
\draw [color={rgb, 255:red, 74; green, 144; blue, 226 }  ,draw opacity=1 ][line width=2.25]  [dash pattern={on 2.53pt off 3.02pt}]  (138.61,338.38) -- (565.2,337.44) ;
\draw [color={rgb, 255:red, 126; green, 211; blue, 33 }  ,draw opacity=1 ][line width=2.25]  [dash pattern={on 2.53pt off 3.02pt}]  (138.28,379.88) -- (567.72,379.02) ;
\draw [color={rgb, 255:red, 245; green, 166; blue, 35 }  ,draw opacity=1 ][line width=2.25]  [dash pattern={on 2.53pt off 3.02pt}]  (138.28,399.88) -- (567.72,399.02) ;
\draw  [fill={rgb, 255:red, 248; green, 231; blue, 28 }  ,fill opacity=1 ] (236.57,273.34) .. controls (236.57,270.89) and (238.55,268.91) .. (241,268.91) .. controls (243.45,268.91) and (245.43,270.89) .. (245.43,273.34) .. controls (245.43,275.78) and (243.45,277.77) .. (241,277.77) .. controls (238.55,277.77) and (236.57,275.78) .. (236.57,273.34) -- cycle ;
\draw  [fill={rgb, 255:red, 248; green, 231; blue, 28 }  ,fill opacity=1 ] (256.57,273.34) .. controls (256.57,270.89) and (258.55,268.91) .. (261,268.91) .. controls (263.45,268.91) and (265.43,270.89) .. (265.43,273.34) .. controls (265.43,275.78) and (263.45,277.77) .. (261,277.77) .. controls (258.55,277.77) and (256.57,275.78) .. (256.57,273.34) -- cycle ;
\draw  [fill={rgb, 255:red, 248; green, 231; blue, 28 }  ,fill opacity=1 ] (276.57,273.34) .. controls (276.57,270.89) and (278.55,268.91) .. (281,268.91) .. controls (283.45,268.91) and (285.43,270.89) .. (285.43,273.34) .. controls (285.43,275.78) and (283.45,277.77) .. (281,277.77) .. controls (278.55,277.77) and (276.57,275.78) .. (276.57,273.34) -- cycle ;
\draw  [fill={rgb, 255:red, 248; green, 231; blue, 28 }  ,fill opacity=1 ] (316.57,273.34) .. controls (316.57,270.89) and (318.55,268.91) .. (321,268.91) .. controls (323.45,268.91) and (325.43,270.89) .. (325.43,273.34) .. controls (325.43,275.78) and (323.45,277.77) .. (321,277.77) .. controls (318.55,277.77) and (316.57,275.78) .. (316.57,273.34) -- cycle ;
\draw  [fill={rgb, 255:red, 248; green, 231; blue, 28 }  ,fill opacity=1 ] (296.57,273.34) .. controls (296.57,270.89) and (298.55,268.91) .. (301,268.91) .. controls (303.45,268.91) and (305.43,270.89) .. (305.43,273.34) .. controls (305.43,275.78) and (303.45,277.77) .. (301,277.77) .. controls (298.55,277.77) and (296.57,275.78) .. (296.57,273.34) -- cycle ;
\draw  [fill={rgb, 255:red, 248; green, 231; blue, 28 }  ,fill opacity=1 ] (336.57,273.34) .. controls (336.57,270.89) and (338.55,268.91) .. (341,268.91) .. controls (343.45,268.91) and (345.43,270.89) .. (345.43,273.34) .. controls (345.43,275.78) and (343.45,277.77) .. (341,277.77) .. controls (338.55,277.77) and (336.57,275.78) .. (336.57,273.34) -- cycle ;
\draw  [fill={rgb, 255:red, 248; green, 231; blue, 28 }  ,fill opacity=1 ] (356.57,273.34) .. controls (356.57,270.89) and (358.55,268.91) .. (361,268.91) .. controls (363.45,268.91) and (365.43,270.89) .. (365.43,273.34) .. controls (365.43,275.78) and (363.45,277.77) .. (361,277.77) .. controls (358.55,277.77) and (356.57,275.78) .. (356.57,273.34) -- cycle ;
\draw  [fill={rgb, 255:red, 248; green, 231; blue, 28 }  ,fill opacity=1 ] (256.57,293.34) .. controls (256.57,290.89) and (258.55,288.91) .. (261,288.91) .. controls (263.45,288.91) and (265.43,290.89) .. (265.43,293.34) .. controls (265.43,295.78) and (263.45,297.77) .. (261,297.77) .. controls (258.55,297.77) and (256.57,295.78) .. (256.57,293.34) -- cycle ;
\draw  [fill={rgb, 255:red, 248; green, 231; blue, 28 }  ,fill opacity=1 ] (236.57,293.34) .. controls (236.57,290.89) and (238.55,288.91) .. (241,288.91) .. controls (243.45,288.91) and (245.43,290.89) .. (245.43,293.34) .. controls (245.43,295.78) and (243.45,297.77) .. (241,297.77) .. controls (238.55,297.77) and (236.57,295.78) .. (236.57,293.34) -- cycle ;
\draw  [fill={rgb, 255:red, 248; green, 231; blue, 28 }  ,fill opacity=1 ] (276.57,293.34) .. controls (276.57,290.89) and (278.55,288.91) .. (281,288.91) .. controls (283.45,288.91) and (285.43,290.89) .. (285.43,293.34) .. controls (285.43,295.78) and (283.45,297.77) .. (281,297.77) .. controls (278.55,297.77) and (276.57,295.78) .. (276.57,293.34) -- cycle ;
\draw  [fill={rgb, 255:red, 248; green, 231; blue, 28 }  ,fill opacity=1 ] (296.57,293.34) .. controls (296.57,290.89) and (298.55,288.91) .. (301,288.91) .. controls (303.45,288.91) and (305.43,290.89) .. (305.43,293.34) .. controls (305.43,295.78) and (303.45,297.77) .. (301,297.77) .. controls (298.55,297.77) and (296.57,295.78) .. (296.57,293.34) -- cycle ;
\draw  [fill={rgb, 255:red, 248; green, 231; blue, 28 }  ,fill opacity=1 ] (336.57,293.34) .. controls (336.57,290.89) and (338.55,288.91) .. (341,288.91) .. controls (343.45,288.91) and (345.43,290.89) .. (345.43,293.34) .. controls (345.43,295.78) and (343.45,297.77) .. (341,297.77) .. controls (338.55,297.77) and (336.57,295.78) .. (336.57,293.34) -- cycle ;
\draw  [fill={rgb, 255:red, 248; green, 231; blue, 28 }  ,fill opacity=1 ] (316.57,293.34) .. controls (316.57,290.89) and (318.55,288.91) .. (321,288.91) .. controls (323.45,288.91) and (325.43,290.89) .. (325.43,293.34) .. controls (325.43,295.78) and (323.45,297.77) .. (321,297.77) .. controls (318.55,297.77) and (316.57,295.78) .. (316.57,293.34) -- cycle ;
\draw  [fill={rgb, 255:red, 248; green, 231; blue, 28 }  ,fill opacity=1 ] (356.57,293.34) .. controls (356.57,290.89) and (358.55,288.91) .. (361,288.91) .. controls (363.45,288.91) and (365.43,290.89) .. (365.43,293.34) .. controls (365.43,295.78) and (363.45,297.77) .. (361,297.77) .. controls (358.55,297.77) and (356.57,295.78) .. (356.57,293.34) -- cycle ;
\draw  [fill={rgb, 255:red, 248; green, 231; blue, 28 }  ,fill opacity=1 ] (376.57,293.34) .. controls (376.57,290.89) and (378.55,288.91) .. (381,288.91) .. controls (383.45,288.91) and (385.43,290.89) .. (385.43,293.34) .. controls (385.43,295.78) and (383.45,297.77) .. (381,297.77) .. controls (378.55,297.77) and (376.57,295.78) .. (376.57,293.34) -- cycle ;
\draw  [fill={rgb, 255:red, 248; green, 231; blue, 28 }  ,fill opacity=1 ] (276.57,313.34) .. controls (276.57,310.89) and (278.55,308.91) .. (281,308.91) .. controls (283.45,308.91) and (285.43,310.89) .. (285.43,313.34) .. controls (285.43,315.78) and (283.45,317.77) .. (281,317.77) .. controls (278.55,317.77) and (276.57,315.78) .. (276.57,313.34) -- cycle ;
\draw  [fill={rgb, 255:red, 248; green, 231; blue, 28 }  ,fill opacity=1 ] (256.57,313.34) .. controls (256.57,310.89) and (258.55,308.91) .. (261,308.91) .. controls (263.45,308.91) and (265.43,310.89) .. (265.43,313.34) .. controls (265.43,315.78) and (263.45,317.77) .. (261,317.77) .. controls (258.55,317.77) and (256.57,315.78) .. (256.57,313.34) -- cycle ;
\draw  [fill={rgb, 255:red, 248; green, 231; blue, 28 }  ,fill opacity=1 ] (296.57,313.34) .. controls (296.57,310.89) and (298.55,308.91) .. (301,308.91) .. controls (303.45,308.91) and (305.43,310.89) .. (305.43,313.34) .. controls (305.43,315.78) and (303.45,317.77) .. (301,317.77) .. controls (298.55,317.77) and (296.57,315.78) .. (296.57,313.34) -- cycle ;
\draw  [fill={rgb, 255:red, 248; green, 231; blue, 28 }  ,fill opacity=1 ] (316.57,313.34) .. controls (316.57,310.89) and (318.55,308.91) .. (321,308.91) .. controls (323.45,308.91) and (325.43,310.89) .. (325.43,313.34) .. controls (325.43,315.78) and (323.45,317.77) .. (321,317.77) .. controls (318.55,317.77) and (316.57,315.78) .. (316.57,313.34) -- cycle ;
\draw  [fill={rgb, 255:red, 248; green, 231; blue, 28 }  ,fill opacity=1 ] (356.57,313.34) .. controls (356.57,310.89) and (358.55,308.91) .. (361,308.91) .. controls (363.45,308.91) and (365.43,310.89) .. (365.43,313.34) .. controls (365.43,315.78) and (363.45,317.77) .. (361,317.77) .. controls (358.55,317.77) and (356.57,315.78) .. (356.57,313.34) -- cycle ;
\draw  [fill={rgb, 255:red, 248; green, 231; blue, 28 }  ,fill opacity=1 ] (336.57,313.34) .. controls (336.57,310.89) and (338.55,308.91) .. (341,308.91) .. controls (343.45,308.91) and (345.43,310.89) .. (345.43,313.34) .. controls (345.43,315.78) and (343.45,317.77) .. (341,317.77) .. controls (338.55,317.77) and (336.57,315.78) .. (336.57,313.34) -- cycle ;
\draw  [fill={rgb, 255:red, 248; green, 231; blue, 28 }  ,fill opacity=1 ] (376.57,313.34) .. controls (376.57,310.89) and (378.55,308.91) .. (381,308.91) .. controls (383.45,308.91) and (385.43,310.89) .. (385.43,313.34) .. controls (385.43,315.78) and (383.45,317.77) .. (381,317.77) .. controls (378.55,317.77) and (376.57,315.78) .. (376.57,313.34) -- cycle ;
\draw  [fill={rgb, 255:red, 248; green, 231; blue, 28 }  ,fill opacity=1 ] (396.57,313.34) .. controls (396.57,310.89) and (398.55,308.91) .. (401,308.91) .. controls (403.45,308.91) and (405.43,310.89) .. (405.43,313.34) .. controls (405.43,315.78) and (403.45,317.77) .. (401,317.77) .. controls (398.55,317.77) and (396.57,315.78) .. (396.57,313.34) -- cycle ;
\draw  [fill={rgb, 255:red, 248; green, 231; blue, 28 }  ,fill opacity=1 ] (396.57,293.34) .. controls (396.57,290.89) and (398.55,288.91) .. (401,288.91) .. controls (403.45,288.91) and (405.43,290.89) .. (405.43,293.34) .. controls (405.43,295.78) and (403.45,297.77) .. (401,297.77) .. controls (398.55,297.77) and (396.57,295.78) .. (396.57,293.34) -- cycle ;
\draw  [fill={rgb, 255:red, 248; green, 231; blue, 28 }  ,fill opacity=1 ] (376.57,273.34) .. controls (376.57,270.89) and (378.55,268.91) .. (381,268.91) .. controls (383.45,268.91) and (385.43,270.89) .. (385.43,273.34) .. controls (385.43,275.78) and (383.45,277.77) .. (381,277.77) .. controls (378.55,277.77) and (376.57,275.78) .. (376.57,273.34) -- cycle ;
\draw  [fill={rgb, 255:red, 248; green, 231; blue, 28 }  ,fill opacity=1 ] (396.57,273.34) .. controls (396.57,270.89) and (398.55,268.91) .. (401,268.91) .. controls (403.45,268.91) and (405.43,270.89) .. (405.43,273.34) .. controls (405.43,275.78) and (403.45,277.77) .. (401,277.77) .. controls (398.55,277.77) and (396.57,275.78) .. (396.57,273.34) -- cycle ;
\draw  [fill={rgb, 255:red, 248; green, 231; blue, 28 }  ,fill opacity=1 ] (237.07,313.6) .. controls (237.07,311.16) and (239.05,309.18) .. (241.5,309.18) .. controls (243.95,309.18) and (245.93,311.16) .. (245.93,313.6) .. controls (245.93,316.05) and (243.95,318.03) .. (241.5,318.03) .. controls (239.05,318.03) and (237.07,316.05) .. (237.07,313.6) -- cycle ;
\draw  [fill={rgb, 255:red, 248; green, 231; blue, 28 }  ,fill opacity=1 ] (276.07,253.34) .. controls (276.07,250.89) and (278.05,248.91) .. (280.5,248.91) .. controls (282.95,248.91) and (284.93,250.89) .. (284.93,253.34) .. controls (284.93,255.78) and (282.95,257.77) .. (280.5,257.77) .. controls (278.05,257.77) and (276.07,255.78) .. (276.07,253.34) -- cycle ;
\draw  [fill={rgb, 255:red, 248; green, 231; blue, 28 }  ,fill opacity=1 ] (256.07,253.34) .. controls (256.07,250.89) and (258.05,248.91) .. (260.5,248.91) .. controls (262.95,248.91) and (264.93,250.89) .. (264.93,253.34) .. controls (264.93,255.78) and (262.95,257.77) .. (260.5,257.77) .. controls (258.05,257.77) and (256.07,255.78) .. (256.07,253.34) -- cycle ;
\draw  [fill={rgb, 255:red, 248; green, 231; blue, 28 }  ,fill opacity=1 ] (296.07,253.34) .. controls (296.07,250.89) and (298.05,248.91) .. (300.5,248.91) .. controls (302.95,248.91) and (304.93,250.89) .. (304.93,253.34) .. controls (304.93,255.78) and (302.95,257.77) .. (300.5,257.77) .. controls (298.05,257.77) and (296.07,255.78) .. (296.07,253.34) -- cycle ;
\draw  [fill={rgb, 255:red, 248; green, 231; blue, 28 }  ,fill opacity=1 ] (316.07,253.34) .. controls (316.07,250.89) and (318.05,248.91) .. (320.5,248.91) .. controls (322.95,248.91) and (324.93,250.89) .. (324.93,253.34) .. controls (324.93,255.78) and (322.95,257.77) .. (320.5,257.77) .. controls (318.05,257.77) and (316.07,255.78) .. (316.07,253.34) -- cycle ;
\draw  [fill={rgb, 255:red, 248; green, 231; blue, 28 }  ,fill opacity=1 ] (356.07,253.34) .. controls (356.07,250.89) and (358.05,248.91) .. (360.5,248.91) .. controls (362.95,248.91) and (364.93,250.89) .. (364.93,253.34) .. controls (364.93,255.78) and (362.95,257.77) .. (360.5,257.77) .. controls (358.05,257.77) and (356.07,255.78) .. (356.07,253.34) -- cycle ;
\draw  [fill={rgb, 255:red, 248; green, 231; blue, 28 }  ,fill opacity=1 ] (336.07,253.34) .. controls (336.07,250.89) and (338.05,248.91) .. (340.5,248.91) .. controls (342.95,248.91) and (344.93,250.89) .. (344.93,253.34) .. controls (344.93,255.78) and (342.95,257.77) .. (340.5,257.77) .. controls (338.05,257.77) and (336.07,255.78) .. (336.07,253.34) -- cycle ;
\draw  [fill={rgb, 255:red, 248; green, 231; blue, 28 }  ,fill opacity=1 ] (376.07,253.34) .. controls (376.07,250.89) and (378.05,248.91) .. (380.5,248.91) .. controls (382.95,248.91) and (384.93,250.89) .. (384.93,253.34) .. controls (384.93,255.78) and (382.95,257.77) .. (380.5,257.77) .. controls (378.05,257.77) and (376.07,255.78) .. (376.07,253.34) -- cycle ;
\draw  [fill={rgb, 255:red, 248; green, 231; blue, 28 }  ,fill opacity=1 ] (396.07,253.34) .. controls (396.07,250.89) and (398.05,248.91) .. (400.5,248.91) .. controls (402.95,248.91) and (404.93,250.89) .. (404.93,253.34) .. controls (404.93,255.78) and (402.95,257.77) .. (400.5,257.77) .. controls (398.05,257.77) and (396.07,255.78) .. (396.07,253.34) -- cycle ;
\draw  [fill={rgb, 255:red, 248; green, 231; blue, 28 }  ,fill opacity=1 ] (237.07,254.1) .. controls (237.07,251.66) and (239.05,249.68) .. (241.5,249.68) .. controls (243.95,249.68) and (245.93,251.66) .. (245.93,254.1) .. controls (245.93,256.55) and (243.95,258.53) .. (241.5,258.53) .. controls (239.05,258.53) and (237.07,256.55) .. (237.07,254.1) -- cycle ;
\draw [color={rgb, 255:red, 248; green, 231; blue, 28 }  ,draw opacity=1 ][line width=4.5]    (429.79,278.24) -- (463.07,278.44) ;
\draw [color={rgb, 255:red, 248; green, 231; blue, 28 }  ,draw opacity=1 ][line width=4.5]    (463.07,278.44) -- (463.07,295) ;
\draw [color={rgb, 255:red, 126; green, 211; blue, 33 }  ,draw opacity=1 ][fill={rgb, 255:red, 126; green, 211; blue, 33 }  ,fill opacity=1 ][line width=2.25]    (249.84,120.58) -- (290.64,120.58) ;
\draw [color={rgb, 255:red, 126; green, 211; blue, 33 }  ,draw opacity=1 ][fill={rgb, 255:red, 126; green, 211; blue, 33 }  ,fill opacity=1 ][line width=2.25]    (290.64,120.58) -- (290.64,132.57) ;

\draw (477.07,271.53) node [anchor=north west][inner sep=0.75pt]    {$\mathcal{Q}^1$};
\draw (52,223.4) node [anchor=north west][inner sep=0.75pt]    {$(A^{1}[ b_{1}]_{j})_{j\leq a_{1}}$};
\draw (186.93,43.51) node [anchor=north west][inner sep=0.75pt]    {$\mathcal{Q}^K$};
\draw (207.13,92.11) node [anchor=north west][inner sep=0.75pt]    {$\mathcal{Q}^{K-1}$};
\draw (447,222.71) node [anchor=north west][inner sep=0.75pt]    {$( a_{1} ,b_{1})$};
\draw (332,173.85) node [anchor=north west][inner sep=0.75pt]  [font=\Huge]  {$\ddots $};
\draw (200,343.85) node [anchor=north west][inner sep=0.75pt]  [font=\Large]  {$\vdots $};
\draw (55,275.4) node [anchor=north west][inner sep=0.75pt]    {$( \wc \omega [ k]_{j})_{ j\leq a_{1}, k\in \{-1,\ \mydots ,\ b_{1} -1\}}$};

\end{tikzpicture}

\captionsetup{width=.8\linewidth}
\caption{An illustration of the base argument in the proof of Theorem \ref{ind_B}. The down-right path $\eta_1$ defined in \eqref{path1} is shown in yellow. The dual weights below $\eta_1$ are shown as yellow dots. }\label{step1fig}
\end{center}
\end{figure}
Clearly, $\mathcal{Q}^1$ is independent of  $\{\omega_{(j, n+1)}\}_{j \leq a_1, n \geq b_1}$. 
We will now rewrite the inter-departure time from \eqref{step1_before} in the following manner. For $k \in {2, \ldots, K}$ and $n \geq b_1$, we shall apply Proposition \ref{2level} to each pair of horizontal levels, namely $\{-1, 0\}, \{0,1\}, \mydots, \{b_1-1, b_1\}$. To simplify the notation, we use $A^1[s]$ to denote the $I[s]$ from \eqref{def_I} and $\wc \omega{}^1[s]$ to denote $\wc Y[s]$ from \eqref{shift}. We have
\begin{align}
&(D^{(n+k)}(A^k[-k], {\mydots}, A^1[-1], \omega[0], \omega[1], {\mydots} ,\omega[n])_j)_{j\leq a_1}\label{step1_calc}\\
&=(D^{(n+k)}(A^k[-k], {\mydots}, \wc \omega{}^1[-1], A^1[0],\omega[1], {\mydots} ,\omega[n])_j)_{j\leq a_1}\nonumber\\
& = (D^{(n+k)}(A^k[-k], {\mydots}, \wc \omega{}^1[-1], \wc \omega{}^1[0], A^1[1], {\mydots},\omega[n])_j)_{j\leq a_1}\nonumber\\
& \qquad  {\small \vdots}\nonumber\\
&  = (D^{(n+k)}(A^k[-k], {\mydots}, \wc \omega{}^1[-1], {\mydots}, \wc \omega{}^1[-b_1-1], A^1[b_1], {\mydots}, \omega[n])_j)_{j\leq a_1}\nonumber
\end{align}
Thus, we have turned the collection of random variables from  \eqref{step1_before} into the following 
\begin{equation}\label{step1}
\Big\{D^{(n+k)}(A^k[-k], {\mydots}, \wc \omega{}^1[-1], {\mydots}, \wc \omega{}^1[b_1-1], A^1[b_1], {\mydots} ,\omega[n])_j, \omega_{(j, n+1)} \Big\}_{j \leq a_1, k\in \{2, {\mydots} K\}, n \geq b_1}.
\end{equation}
Referring to Figure \ref{step1fig}, 
the positions of the increments
$(A^1[b_1])_{j\leq a_1}$ together with $\{\mathbf 
 y_{i}\}_{i_1 \leq i < \infty}$ form a down-right path 
\begin{equation}\label{path1}
{\eta}^1 = (-\infty, b_1) \to (a_1, b_1) \to \{\mathbf y_{i}\}_{i_1 \leq i < \infty}.
\end{equation}
Applying Proposition \ref{BK} to the last-passage value $G^{A[-1]^1}_{(0,-1), \bbullet}$ along the path $\eta_1$, we see that the increments in $\mathcal{Q}^1$ are independent, and they are independent of the random variables inside \eqref{step1}. Using the equality in \eqref{step1_calc}, we obtain that  the random variables in \eqref{step1_before} and $\mathcal{Q}^1$ are independent. Since $\mathcal{Q}^2, \mydots \mathcal{Q}^K$ are functions of \eqref{step1_before}, we have finished verifying the base case. The arguments hereafter for $\mathcal{Q}^2, {\mydots} \mathcal{Q}^K$ will be solely based on the random variables inside the collection \eqref{step1_before}. Hence, the independence with $\mathcal{Q}^1$ will not be affected. 

Similar to the widely recognized ``corner-flipping" induction technique for KPZ models, the inductive step remains exactly the same as the base case. And instead of arguing for $\mathcal{Q}^k$, we will work with  $\mathcal{Q}^2$ in order to explicitly show that the argument solely involves the random variables from \eqref{step1_before}. The general case for $\mathcal{Q}^k$ will follow the same arguments as $\mathcal{Q}^2$, with the only variation being the changes to the indices.

To start, we note again that for computing $\mathcal{Q}^2, \mydots \mathcal{Q}^K$, we only need the random variables inside \eqref{step1_before}, hence the independence with $\mathcal{Q}^1$ will not be affected.
Let increments inside $\mathcal{Q}^2$ be computed using \eqref{step1_before}, and we note that $\mathcal{Q}^3, \mydots \mathcal{Q}^K$ are functions of the following random variables 
\begin{equation}\label{step2_before}\Big\{(D^{(n+k)}(A^k[-k], {\mydots},A^2[-2],  A^1[-1], \omega[0], {\mydots}, \omega[n])_j, \omega_{(j,n+1)}\Big\}_{j \leq a_2, k \in \{3, {\mydots} K\}, n \geq b_2}.
\end{equation}
Just like before, we will apply Proposition \ref{2level} to the departure process in \eqref{step2_before} for the pairs of levels $\{-2,-1\}$, $\{-1,0\}$, $\mydots$, $\{b_2-1, b_2\}$. For each $k\in \{3, {\mydots}, K\}$, $n\geq b_2$, 
\begin{align}
&(D^{(n+k)}(A^k[-k], {\mydots}, A^2[-2], A^1[-1], \omega[0], {\mydots} ,\omega[n])_j)_{j\leq a_2}\label{step2_calc}\\
&  = (D^{(n+k)}(A^k[-k], {\mydots},  \wc A{}^{1,2}[-2], A^2[-1], \omega[0],  {\mydots},  \omega[n])_j)_{j\leq a_2} \nonumber\\
&= (D^{(n+k)}(A^k[-k], {\mydots},  \wc A{}^{1,2}[-2], \wc\omega{}^2 [-1], A^2[0],  {\mydots},  \omega[n])_j)_{j\leq a_2} \nonumber\\
& \qquad  {\small \vdots}\nonumber\\
&  = (D^{(n+k)}A^k[-k], {\mydots},  \wc A{}^{1,2}[-2], \wc\omega{}^2 [-1] ,{\mydots}, \wc\omega{}^2[b_2-1], A^2[b_2] ,{\mydots}, \omega[n])_j)_{j\leq a_2}.\nonumber
\end{align}

The increments inside $(A^2[b_2])_{j\leq a_2}$ and $\{\mathbf y_i\}_{i_2\leq i \leq i_1}$ form a down-right path
$$\eta^2 = (-\infty, b_2) \to (a_2, b_2) \to \{\mathbf y_i\}_{i_2 \leq i \leq i_1}.$$
Similar to before, applying Proposition \ref{BK} to $G^{A^2[-2]} _{(0,-2),\bbullet}$ along the down-right paths $\eta^2$, we obtain that the increments inside $\mathcal{Q}^2$ are independent, furthermore, they are independent of the random variables from
$$
\Big\{D(^{(n+k)}A^k[-k], {\mydots},  A^{1,2}[-2], \wc\omega{}^2 [-1] {\mydots}, \wc\omega{}^2[b_2-1], A^2[b_2], {\mydots}, \omega[n])_j, \omega_{(j,n+1)}\Big\}_{j\leq a_2, k \in \{3,{\mydots}, K\}, n\geq b_2}.
$$
Hence, by the equalities from \eqref{step2_calc}, $\mathcal{Q}^2$ is independent with \eqref{step2_before}.
On the other hand, $\mathcal{Q}^3, {\mydots} \mathcal{Q}^K$ are calculated using of \eqref{step2_before}, 
we have shown that the increments inside 
$\mathcal{Q}^1 \cup \mathcal{Q}^2$ are independent, and they are independent of $\sigma(\mathcal{Q}^3, {\mydots} \mathcal{Q}^K)$. 

For $\mathcal{Q}^k$, the argument is the same as $\mathcal{Q}^2$,  except for the changes in the indices. With this, we have finished the proof of Theorem \ref{ind_B}.
\end{proof}

Next, we come back to the CGM.  In order to make a connection to the independence result in Theorem \ref{ind_B}, we rotate the direction in our CGM by $180^\circ$. Thus, now the admissible paths take $-\mathbf e_1$ and $-\mathbf e_2$ steps, and semi-infinite geodesics have directions in $-\mathbb{R}^2_{> 0} \setminus \{(0,0)\}$. Let $\mathbf v_N$ be a sequence of vectors such that $|\mathbf v_N|_1 \to \infty$ and $\mathbf v_N/|\mathbf v_N|_1 \to -\boldsymbol\xi[\rho]/|\boldsymbol\xi[\rho]|_1$, the Busemann function in the direction $-\boldsymbol\xi[\rho]$ is then defined as the almost sure limit
 \begin{equation}\label{neg_dir}\wt B^\rho_{\mathbf x,\mathbf y} = \lim_{N\to \infty} \wt G_{\mathbf x, \mathbf v_N} - \wt G_{\mathbf y, \mathbf v_N}.
 \end{equation}

Fix $K \in \mathbb{Z}_{>0}$ and $\rho_1 < \mydots < \rho_K \in (0, 1)$. Define an infinite down-right path $\mathcal{Y} =  \{\mathbf y_i\}_{i\in \mathbb{Z}}$ with $\mathbf y_{i+1} - \mathbf y_i \in \{\mathbf e_1, -\mathbf e_2\}$. Fix any $i_1 > i_2> \mydots > i_{K-1}$, and define the collections of Busemann increments
\begin{align}
\mathcal{E}^1 &= \{\wt B^{\rho_1}_{\mathbf y_{i+1}, \mathbf y_i} : i_1 \leq i <\infty  \}\nonumber\\
\mathcal{E}^{k} &= \{\wt B^{\rho_{k}}_{\mathbf y_{i+1}, \mathbf y_i} : i_{k}\leq  i < i_{k-1} \} \quad \text{ for }k \in \{2, \mydots, K-1\}\nonumber\\
\mathcal{E}^K &= \{\wt B^{\rho_K}_{\mathbf y_{i+1}, \mathbf y_i} : -\infty < i <i_{K-1} \}. \nonumber
\end{align}
Then, clearly the following proposition implies Theorem \ref{main}.
\begin{proposition}\label{last_step}
The collection of random variables $\mathcal{E}^1 \cup \mathcal{E}^2 \cup {\mydots} \cup \mathcal{E}^K$ are mutually independent. 
\end{proposition}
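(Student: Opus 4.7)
The plan is to reduce Proposition \ref{last_step} to Theorem \ref{ind_B} by realizing the multi-directional Busemann increments as the increments of a coupled system of nested stationary LPPs. By the $180^\circ$ rotational symmetry of the CGM, the joint law of the rotated Busemann functions $\{\wt B^{\rho_k}\}_{k=1}^K$ along $\mathcal{Y}$ is identical to the joint law of $\{B^{\rho_k}\}_{k=1}^K$ along a reflected down-right path, so it suffices to establish the analogous independence statement in the unrotated setting.

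The central step is to exhibit a coupling under which the Busemann increments along $\mathcal{Y}$ coincide with the LPP increments $\mathcal{Q}^k$ from \eqref{def_Qk}. For this I would invoke the multi-line queueing construction of Fan--Sepp\"al\"ainen \cite{Fan-Sep-20}, whose input is precisely the environment used in Theorem \ref{ind_B}: jointly independent iid $\mathrm{Exp}(1-\rho_k)$ boundaries $A^k[-k]$ on the line $y = -k$ for $k=1,\ldots,K$, together with iid $\mathrm{Exp}(1)$ bulk weights on $\mathbb{Z}\times \mathbb{Z}_{\geq 0}$. The Burke property, applied iteratively to the nested last-passage values $G^{A^k[-k]}_{(0,-k),\bbullet}$, shows that the horizontal and vertical increments of $G^{A^k[-k]}_{(0,-k),\bbullet}$ on the upper half-plane agree in joint distribution with the Busemann increments in directions $\xi[\rho_1],\ldots,\xi[\rho_K]$, simultaneously across all directions. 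After translating $\mathcal{Y}$ vertically so that it lies inside $\mathbb{Z}\times\mathbb{Z}_{\geq 0}$, this identification maps $\mathcal{E}^k$ onto $\mathcal{Q}^k$: the boundary closest to the path, $A^1[-1]$ with the largest rate $1-\rho_1$, is coupled to direction $\rho_1$, which both in $\mathcal{E}^1$ and in $\mathcal{Q}^1$ is assigned to the rightmost segment of the down-right path. Mutual independence of $\bigcup_k \mathcal{Q}^k$ from Theorem \ref{ind_B} then transfers immediately to $\bigcup_k \mathcal{E}^k$.

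The main obstacle is justifying the joint distributional identity between the multi-directional Busemann process and the nested LPP output. The single-direction identity is classical via the Burke property and the stationary LPP with an exponential boundary, but obtaining the joint identity across all $K$ directions simultaneously is what requires the multi-line queueing construction of \cite{Fan-Sep-20}. In particular, one must verify that the order in which the boundaries are stacked (rate $1-\rho_k$ at level $-k$, with smaller $\rho$ closer to the path) is consistent with the monotonicity of the Busemann process in the parameter $\rho$, so that the segment of $\mathcal{Y}$ earmarked for direction $\rho_k$ in $\mathcal{E}^k$ is precisely the segment that picks up the $k$-th nested LPP's increments in $\mathcal{Q}^k$.
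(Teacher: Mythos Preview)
Your proposal is correct and follows essentially the same route as the paper: identify the multi-directional Busemann increments $\{\mathcal{E}^k\}$ with the nested LPP increments $\{\mathcal{Q}^k\}$ via the Fan--Sepp\"al\"ainen multi-line queueing construction, and then invoke Theorem \ref{ind_B}. The paper makes this identification directly for the rotated process $\wt B^{\rho_k}$ by citing Theorem 3.2 and Lemma 3.3 of \cite{Fan-Sep-20} (together with Proposition \ref{A_nest} to pass from $G^{D^{(k-1)}(\cdot)}$ to $G^{A^k[-k]}$), whereas you insert an extra step using the $180^\circ$ symmetry to reduce to the unrotated $B^{\rho_k}$; this is a cosmetic difference, and your matching of directions to segments (smallest $\rho$ on the rightmost portion of $\mathcal{Y}$, boundary $A^1[-1]$ closest to the path) is exactly the one the paper uses.
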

\begin{proof}
Let us fix the same down-right paths $\mathcal{Y}$, indices $i_1>i_2 >\mydots > i_{K-1}$, and define $\{\mathcal{E}_i\}_{i=1}^K$ and $\{\mathcal{Q}_i\}_{i=1}^K$. We will show that the collection of random variables $\mathcal{E}^1 \cup \mathcal{E}^2 \cup {\mydots} \cup \mathcal{E}^K$ (viewed as a random vector) is equal in distribution with $\mathcal{Q}^1 \cup \mathcal{Q}^{2} \cup {\mydots} \cup \mathcal{Q}^K$. Then, Theorem \ref{ind_B} gives us the desired independence as claimed in our proposition. 

The equality in distribution directly follows from two results from \cite{Fan-Sep-20}. Let $\wt B^{\rho_{k}}[m]$ to denote the collection $(\wt B^{\rho_{k}}_{(j,m), (j-1,m)})_{j\in \mathbb{Z}}$.  Theorem 3.2 from \cite{Fan-Sep-20} states that the joint distribution of 
$(\wt B^{\rho_1}[-1], \wt B^{\rho_{2}}[-1], \mydots, \wt B^{\rho_{K}}[-1])$ is equal to 
$$\Big(A^1[-1], D^{(1)}(A^2[-2], A^1[-1]), \mydots , D^{(K-1)}(A^{K}[-K], \mydots, A^1[-1])\Big).$$
Then, Lemma 3.3 of \cite{Fan-Sep-20} states that the Busemann increments in each direction $\rho_k$ above the line $y=-1$ has the same distribution as the increments from $G_{(0,-1),\bbullet}^{D^{(k-1)}(A^k[-k], \mydots, A^1[-1])}$, 
which are equal to the increments $G_{(0,-k),\bbullet}^{A^k[-k]}$ by Proposition \ref{A_nest}. With this, we have finished the proof of this proposition.

\end{proof}

\section{Modifications for the inverse-gamma polymer }\label{ig_poly}

In this section, we start by outlining the necessary modifications required to adapt the proofs from Section \ref{q_basic} and Section \ref{pf_main1} for the inverse-gamma polymer. In the second part, we will provide a proof of Theorem \ref{mid}.

From Section \ref{q_basic}, the LPP with horizontal boundary defined in \eqref{Gh}, will be replaced by the partition function starting from a horizontal boundary. Given the boundary values $Y[0]$, define
$$h_i = \begin{cases}
\prod_{j=0}^i Y_{(j,0)} \qquad & \textup{ if } i \geq 0\\
\prod_{j=1}^{|i|} Y_{(-j,0)}^{-1} \qquad & \textup{ if } i < 0.
\end{cases}
$$
The partition function with horizontal boundary $Y[0]$ is defined by 
\begin{equation}\label{free}
\log Z^{Y[0]}_{\mathbf 0,\mathbf x} =  \log \Big(\sum_{i \in \mathbb{Z}} h_i \cdot Z_{(i,0), \mathbf x}\Big).
\end{equation}
Then the increments $I, J$ and the dual weights from \eqref{def_inc} are now given by 
\begin{align*}
I_{[\![\mathbf z, \mathbf z+\mathbf e_1]\!]}  &= \begin{cases}
Y_{\mathbf z+\mathbf e_1}  \qquad & \text{ if }\mathbf z\cdot \mathbf e_2 =0\\
 Z^{{Y[0]}} _{\mathbf 0,\mathbf z+\mathbf e_1} /Z^{{Y[0]}}_{\mathbf 0,\mathbf z} \qquad & \text{ if } \mathbf z\cdot \mathbf e_2 \geq 1
\end{cases}\\ 
J_{[\![\mathbf z, \mathbf z+\mathbf e_2]\!]} &= Z^{{Y[0]}} _{\mathbf 0, \mathbf z+\mathbf e_2} / Z^{Y[0]} _{\mathbf 0,\mathbf z} \\ 
\wc Y_{\mathbf z} &= \frac{1}{{I_{[\![\mathbf z, \mathbf z+\mathbf e_1]\!]}} + {J_{[\![\mathbf z, \mathbf z+\mathbf e_2]\!]}}}.
\end{align*}
The inter-departure time and the sojourn time become 
\begin{align}
D^{(1)}(Y[0], Y[1]) &= \big(Z^{Y[0]} _{\mathbf 0, (j,1)}/ Z^{Y[0]} _{\mathbf 0, (j-1,1)}\big)_{j\in \mathbb{Z}} \nonumber\\
S(Y[0], Y[1]) &= \big(Z^{Y[0]} _{\mathbf 0, (j,1)} /Z^{Y[0]} _{\mathbf 0, (j,0)}\big)_{j\in \mathbb{Z}}.\nonumber
\end{align}

Next, we will give the modifications needed in order to lift the results presented in Section \ref{q_basic} to the positive-temperature setting. The main ingredient for proving Proposition \ref{A_nest} was  \cite[Lemma A.4]{Fan-Sep-20}, but now we replace it with \cite[Lemma 6.3]{bat-fan-sep-23-}. As a result, Proposition \ref{D_to_LPP} and Proposition \ref{S_to_LPP} remain valid, as they follow from Proposition \ref{A_nest}.
The queueing identity in Proposition \ref{2level} relied on \cite[Lemma 4.5]{Fan-Sep-20}, and it can be replaced with the positive-temperature version in  \cite[Lemma A.5]{Bus-Sep-22}.
Regarding the independence result in Proposition \ref{BK}, it was originally based on  \cite[Lemma B.2]{Fan-Sep-20}. The corresponding positive-temperature counterpart can be found in \cite[Lemma B.2]{Bus-Sep-22} and applied to our induction argument.

Next, let us move on to Section \ref{pf_main1}. To start, the difference of last-passage values inside the collection $\{\mathcal{Q}^k\}_{k=1}^K$ will be replaced by the corresponding ratios of partition functions. The statement and proof of Theorem \ref{ind_B} remain unchanged for the positive-temperature setting.
In Proposition \ref{last_step}, the random variables inside the collection $\{\mathcal{E}_k\}_{k=1}^K$ will now be replaced by the exponential of the Busemann increments, i.e.~$\exp({B^{\rho_{k}}_{\mathbf y_{i+1}, \mathbf y_i}})$. 
The main inputs for the proof rely on Theorem 3.2 and Lemma 3.3 from \cite{Fan-Sep-20}. The positive-temperature version of these can be found in \cite[Theorem 4.1]{bat-fan-sep-23-}, \cite[Theorem 6.23(a)]{bat-fan-sep-23-} and step 2 in the proof of Theorem 6.23 in \cite{bat-fan-sep-23-}.

\subsection{Proof of Theorem \ref{mid}}

Instead of proving the expectation bound in Theorem \ref{mid},  we will establish the following tail estimate, which applies to more general endpoints. This result will directly imply Theorem \ref{mid}. 

Recall that $Z^\textup{ptl}_{\mathbf 0, N}$ and $Q^\textup{ptl}_{\mathbf 0, N}$ denote the point-to-line partition function and the quenched polymer measure between the origin and the line $x+y = 2N$. Let $\{\textsf{end} \leq \delta N^{2/3}\}$ denote the collection of directed paths that start from the origin and end at $(N+s, N-s)$ for $|s| \leq \delta N^{2/3}$. Let us also introduce a new notation.
Fix $m\in \mathbb{Z}$, let $\{\textsf{end}_m^+ \leq \delta N^{2/3}\}$ denote the collection of directed paths starting from the origin and ending at $(N+m+s, N-m-s)$ for $0\leq s \leq \delta N^{2/3}$.

\begin{proposition} \label{prop_mid}
There exist finite strictly positive constants $\delta_0, N_0, C$ such that for each $0< \delta \leq \delta_0$, $N\geq N_0$, and $|m| \leq |\log \delta|N^{2/3}$,
$$\mathbb{P}\Big(Q^{\textup{ptl}}_{\mathbf 0,N}\{\textup{\textsf{end}}_m^+\leq \delta N^{2/3}\}  \geq e^{-|\log \delta|^2 \sqrt{\delta}N^{1/3}}\Big) \leq C|\log (\delta\vee N^{-2/3})|^{10} (\delta\vee N^{-2/3}).$$
\end{proposition}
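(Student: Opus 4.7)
The strategy is to upgrade the one-direction argument from \cite{ras-sep-she-} (which gives the $\sqrt{\delta\vee N^{-2/3}}\,|\log|^{10}$ bound) to a two-direction argument, where the two witnessing events are made independent via Theorem \ref{main}, effectively squaring the $\sqrt{\delta}$ factor into $\delta$.

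First I would reduce to a statement about partition function ratios. Writing $v_{m,s}=(N+m+s,N-m-s)$,
\[
Q^{\textup{p2l}}_{0,N}\{\textup{\textsf{end}}_m^+\le \delta N^{2/3}\}
=\frac{\sum_{s=0}^{\lfloor \delta N^{2/3}\rfloor} Z_{0,v_{m,s}}}{Z^{\textup{p2l}}_{0,N}}.
\]
The bad event forces the portion of the point-to-line partition function supported on a near-diagonal strip of width $\delta N^{2/3}$ at offset $m$ to be exponentially (in $|\log\delta|^2\sqrt\delta N^{1/3}$) close to the whole. This in turn forces the partition functions $Z_{0,v_{m,s}}$ to be comparatively larger (and/or the $Z_{0,v'}$ with $v'$ away from the diagonal to be comparatively smaller) than their typical sizes predict.

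Next I would introduce two characteristic directions $\rho^+$ and $\rho^-$ bracketing $1/2$, with $|\rho^\pm-1/2|$ of the natural transversal scale $(\delta\vee N^{-2/3})^{1/2}$, chosen so that $\xi[\rho^+]$ points slightly above and $\xi[\rho^-]$ slightly below the point $(N,N)$. For each $\rho^\pm$, the Busemann function $B^{\rho^\pm}$ lets one write a stationary representation of the free energy, and telescoping the Busemann increments along two well-chosen down-right paths (one passing above and one below $(N,N)$) converts the deficit/surplus of partition-function mass into a statement about sums of Busemann increments in direction $\rho^\pm$. Adapting the single-direction arguments of \cite{ras-sep-she-} in each direction would produce two events $E^+$ and $E^-$ with
\[
\{Q^{\textup{p2l}}_{0,N}\{\textup{\textsf{end}}_m^+\le \delta N^{2/3}\}\ge e^{-|\log\delta|^2\sqrt\delta N^{1/3}}\}\subseteq E^+\cap E^-,
\]
each event a function of Busemann increments in one of the two directions along its down-right path, and each obeying
$\mathbb{P}(E^\pm)\le C|\log(\delta\vee N^{-2/3})|^{5}\sqrt{\delta\vee N^{-2/3}}$ (i.e.\ the square root of the target bound, possibly with the log exponent split evenly between the two directions).

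Finally the decisive point: the two down-right paths and the two directions $\rho^\pm$ would be chosen so that the coalescence hypothesis of Theorem \ref{main} applies. Concretely, the $\xi[\rho^+]$-geodesics from points on the $E^+$ path and the $\xi[\rho^-]$-geodesics from points on the $E^-$ path almost surely intersect, so by Theorem \ref{main} the collections of Busemann increments (hence $E^+$ and $E^-$) are independent. This yields
\[
\mathbb{P}(E^+\cap E^-)=\mathbb{P}(E^+)\mathbb{P}(E^-)\le C|\log(\delta\vee N^{-2/3})|^{10}(\delta\vee N^{-2/3}),
\]
which is the claimed bound.

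The main obstacle is the orchestration in the second and third paragraphs: choosing the down-right paths and the two directions so that (i) the bad quenched-measure event simultaneously forces both Busemann-based witnesses $E^+$ and $E^-$ with $\sqrt\delta$-type probabilities, (ii) the increments involved in $E^+$ and $E^-$ lie on a common down-right path configuration satisfying the coalescence hypothesis of Theorem \ref{main}, and (iii) the log-power bookkeeping is compatible with the stated exponent. The one-direction probability estimate inside $E^\pm$ should carry over without much change from \cite{ras-sep-she-}; the novelty is entirely in harvesting independence to pass from $\sqrt\delta$ to $\delta$.
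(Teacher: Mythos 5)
Your proposal matches the paper's proof in all essentials: the bad quenched event is reduced to the free-energy profile along the antidiagonal being flat on both sides of the endpoint window, each one-sided flatness event becomes a running-maximum event for a random walk of Busemann increments in a direction tilted off $1/2$ (probability $\asymp|\log\delta|^{3}\sqrt{\delta}$ by the random-walk lower-bound estimate), and Theorem \ref{main} with $K=2$ makes the two sides independent, squaring $\sqrt{\delta}$ into $\delta$. The only substantive deviations lie in details you explicitly deferred to the ``orchestration'': the paper tilts the two directions by $q_0|\log\delta|N^{-1/3}$ rather than $(\delta\vee N^{-2/3})^{1/2}$ (the latter, taken literally, gives a drift too large for the running-maximum bookkeeping), obtains the walks from the random-walk comparison of Theorem 3.28 of \cite{bas-sep-she-23} coupled to Busemann boundaries, and covers the bad event by a \emph{union} of the two-sided flatness event and a separate event that the free energy gains $|\log\delta|^{2}\sqrt{\delta}N^{1/3}$ over the narrow strip, the latter bounded by $\delta$ via a sub-exponential maximal inequality rather than being absorbed into $E^{+}\cap E^{-}$.
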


\begin{proof}
Note it suffices for us to give the proof assuming that $\delta \geq \tfrac{1}{2} N^{-2/3}$.
To simplify the notation, let $r = |\log \delta|$. Then, it holds that 
\begin{align}
&\mathbb{P}\Big(Q^{\textup{ptl}}_{\mathbf 0,N}\{\textup{\textsf{end}}_m^+\leq \delta N^{2/3}\}  \geq e^{-|\log \delta|^2 \sqrt{\delta}N^{1/3}}\Big) \nonumber\\
& =  \mathbb{P}\Big(\log Z^{\textup{ptl}}_{\mathbf 0, N} -  \log Z^{\textup{ptl}}_{\mathbf 0, N}\{\textsf{end}_m^+\leq \delta N^{2/3}\}  \leq |\log \delta|^2 \sqrt{\delta}N^{1/3}\Big)\nonumber\\
& \leq  \mathbb{P}\Big(\log Z^{\textup{ptl}}_{\mathbf 0, N} \{\textsf{end} \leq 2rN^{2/3}\}-  \log Z^{\textup{ptl}}_{\mathbf 0, N}\{\textsf{end}_m^+\leq \delta N^{2/3}\}  \leq |\log \delta|^2 \sqrt{\delta}N^{1/3}\Big)\\
& \leq  \mathbb{P}\Big(\max_{|i| \leq \floor{2rN^{2/3}}}\Big[\log Z_{\mathbf 0, (N+i, N-i)} \Big] - \max_{0 \leq j \leq {\delta N^{2/3}}} \Big[\log Z_{\mathbf 0, (N+m+j, N-m-j)} \Big]  \leq 2|\log \delta|^2 \sqrt{\delta}N^{1/3}\Big)\nonumber\\
& \leq  \mathbb{P}\Big(\max_{|i| \leq {2rN^{2/3}}}\Big[\log\frac{ Z_{\mathbf 0, (N+i, N-i)}}{Z_{\mathbf 0, (N+m, N-m)}} \Big] - \max_{0 \leq j \leq {\delta N^{2/3}}} \Big[\log \frac{Z_{\mathbf 0, (N+m+j, N-m-j)}}{{Z_{\mathbf 0, (N+m, N-m)}}} \Big]  \leq 2|\log \delta|^2 \sqrt{\delta}N^{1/3}\Big)\nonumber\\
& \leq  \mathbb{P}\Big(\max_{|i| \leq {2rN^{2/3}}}\Big[\log \frac{Z_{\mathbf 0, (N+i, N-i)}}{Z_{\mathbf 0, (N+m, N-m)}} \Big]  \leq 3|\log \delta|^2  \sqrt{\delta}N^{1/3}\Big) \nonumber\\ 
& \qquad \qquad \qquad  + \mathbb{P}\Big( \max_{0 \leq j \leq {\delta N^{2/3}}} \Big[\log\frac{ Z_{\mathbf 0, (N+m+j, N-m-j)}}{{Z_{\mathbf 0, (N+m, N-m)}}} \Big]  \geq |\log \delta|^2 \sqrt{\delta}N^{1/3}\Big)\nonumber\\
& \leq  \mathbb{P}\Big(\Big\{\max_{1\leq k \leq rN^{2/3} }\Big[\log \frac{Z_{\mathbf 0, (N+m+k, N-m-k)}}{Z_{\mathbf 0, (N+m, N-m)}} \Big]  \leq 3|\log \delta|^2  \sqrt{\delta}N^{1/3} \} \nonumber\\
& \qquad \qquad \qquad \bigcap \Big\{\max_{-rN^{2/3}\leq k \leq -1 }\Big[\log \frac{Z_{\mathbf 0, (N+m+k, N-m-k)}}{Z_{\mathbf 0, (N+m, N-m)}} \Big]  \leq 3|\log \delta|^2  \sqrt{\delta}N^{1/3}\Big\}\Big) \label{trest1}\\ 
& \qquad \qquad \qquad  \qquad \qquad + \mathbb{P}\Big( \max_{0 \leq j \leq {\delta N^{2/3}}} \Big[\log\frac{ Z_{\mathbf 0, (N+m+j, N-m-j)}}{{Z_{\mathbf 0, (N+m, N-m)}}} \Big]  \geq |\log \delta|^2 \sqrt{\delta}N^{1/3}\Big).\label{trest2}
\end{align}

Next, we will bound the probabilities \eqref{trest1} and \eqref{trest2} separately, using a random walk comparison. To do this, we will use Theorem 3.28 from \cite{bas-sep-she-23} for the endpoint $(N,N)$. Note that the estimates from there are applicable to our shifted endpoint $(N+m, N-m)$ because we have assumed that $|m| \leq rN^{2/3}$. 
Theorem 3.28 from \cite{bas-sep-she-23} states that there exists a fixed $q_0$ and two collections of i.i.d.~random variables $\{H_i\}_{i\in \mathbb{Z}}, \{V_i\}_{i\in \mathbb{Z}}$ such that the following holds. 
The distributions of
\begin{equation}\label{step_dist}
H_i \text{ and} -V_i \stackrel{d}{=} \log X- \log Y
\end{equation}
where $X \sim \textup{Ga}^{-1}(1/2-q_0 r N^{-1/3})$, $Y\sim  \textup{Ga}^{-1}(1/2 + q_0 r N^{-1/3})$ and  $X, Y$ are independent. Furthermore, there exists an an event $A$ with $\mathbb{P}(A) \geq 1-e^{-Cr^3}$
such that on the event $A$, for each $ 1\leq k \leq rN^{2/3}$
\begin{align}\label{bd1}
\log \tfrac{9}{10} + \sum_{i=1}^k V_i \leq \log \frac{Z_{\mathbf 0,(N+m+k, N-m-k)}}{Z_{\mathbf 0, (N+m, N-m)}} \leq \log \tfrac{10}{9} + \sum_{i=1}^k H_i,
\end{align}
and for each $-rN^{2/3} \leq k \leq -1$, 
\begin{align}\label{bd2}
\log \tfrac{9}{10} + \sum_{i=k+1}^{0} -H_i \leq \log \frac{Z_{\mathbf 0,(N+m+k, N-m-k)}}{Z_{\mathbf 0, (N+m, N-m)}} \leq \log \tfrac{10}{9} + \sum_{i=k+1}^{0} -V_i.
\end{align}
Next, we will introduce a coupling for $\{H_i\}_{i\in \mathbb{Z}}, \{V_i\}_{i\in \mathbb{Z}}$ using the Busemann functions. 

Recall in the proof of Theorem 3.28 from \cite{bas-sep-she-23}, we attached two different (southwest) boundary weights, call them $W$ and $R$, to the southwest boundary based at the vertex $(-1,-1)$. 
More precisely, define $W_{(-1,-1)} = R_{(-1,-1)} = 0$, and for $k \geq 0$, the boundary $W$ is given by the independent collection random variables with the distributions
\begin{align*}
W_{(k, -1)} &\sim \textup{Ga}^{-1}(1/2 -q_0 rN^{-1/3})\\
W_{(-1, k)} &\sim \textup{Ga}^{-1}(1/2 +q_0 rN^{-1/3}).
\end{align*}
And the boundary $R$ is given by the independent collection 
\begin{align*}
R_{(k, -1)} &\sim \textup{Ga}^{-1}(1/2 +q_0 rN^{-1/3})\\
R_{(-1, k)} &\sim \textup{Ga}^{-1}(1/2 -q_0 rN^{-1/3}).
\end{align*}
Then, the random variables $\{H_i\}$ and $\{V_i\}$ from the estimates \eqref{bd1} and \eqref{bd2} are computed as the increments of the free energies with these boundaries  
\begin{align*}
H_i = \log Z^W_{(-1,-1), (N+m+i+1, N-m-i-1)} - \log Z^W_{(-1,-1), (N+m+i, N-m-i))}\\
V_i = \log Z^R_{(-1,-1), (N+m+i+1, N-m-i-1)} - \log Z^R_{(-1,-1), (N+m+i, N-m-i))}.
\end{align*}
Note that the free energies mentioned above have southwest boundaries instead of horizontal boundaries. They are defined the same way as the point-to-point free energy introduced below \eqref{def_part}, except that the random weights are no longer i.i.d.~due to the presence of the boundary weights.

In contrast to Theorem 3.28 from \cite{bas-sep-she-23}, where no specific coupling between $W$ and $R$ was required, we will now proceed to establish a specific coupling using the Busemann function. This coupling will, in turn, provide us with a coupling between the $H_i$'s and $V_i's$. Such coupling has appeared previously for the CGM and the inverse-gamma polymer, see \cite{balzs2019nonexistence, balzs2020local, Bus-Sep-22, ras-sep-she-, seppcoal}.

Recall $\wt B^\rho_{x,y}$ is the Busemann function defined in the southwest direction $-\xi[\rho]$. We now fix our two boundaries to be
\begin{align*}
W_{(k, -1), (k+1, -1)} &= e^{\wt B^{1/2 + q_0 r N^{-1/3}}_{(k+1, -1), (k, -1)}}, \quad 
W_{(-1, k), (-1, k+1)} = e^{\wt B^{1/2 + q_0 r N^{-1/3}}_{(-1, k+1), (-1, k)}}\\
R_{(k, -1), (k+1, -1)} &= e^{\wt B^{1/2 - q_0 r N^{-1/3}}_{(k+1, -1), (k, -1)}},
\quad 
R_{(-1, k), (-1, k+1)} = e^{\wt B^{1/2 - q_0 r N^{-1/3}}_{(-1, k+1), (-1, k)}}.
\end{align*}
Next, we will leverage the same fact used in the proof of Proposition \ref{last_step}, which essentially states that the increment computed from free energy with the Busemann boundary, $W$ and $R$,  coincides precisely with the Busemann increments defined in \eqref{neg_dir}, i.e.
\begin{align}
H_i &= \log Z^W_{(-1,-1), (N+m+i+1, N-m-i-1)} - \log Z^W_{(-1,-1), (N+m+i, N-m-i))} \nonumber\\
& = {\wt B^{1/2 + q_0 r N^{-1/3}}_{N+m+i+1, N-m-i-1), (N+m+i, N-m-i)}}\label{buseH}\\
V_i &= \log Z^R_{(-1,-1), (N+m+i+1, N-m-i-1)} - \log Z^R_{(-1,-1), (N+m+i, N-m-i))} \nonumber\\
&= {\wt B^{1/2 - q_0 r N^{-1/3}}_{(N+m+i+1, N-m-i-1), (N+m+i, N-m-i)}}.\label{buseV}
\end{align}

We continue to bound the probabilities \eqref{trest1} and \eqref{trest2}.  Starting with \eqref{trest1}, the first inequality below follows from estimates \eqref{bd1} and \eqref{bd2}; the next equality follows from identities \eqref{buseH}, \eqref{buseV} and the independence result from Theorem \ref{main}, 
\begin{align*}
\eqref{trest1} &\leq  \mathbb{P}\Big(\Big\{\max_{1\leq k \leq rN^{2/3} } \sum_{i=1}^k V_i  \leq 4|\log \delta|^2  \sqrt{\delta}N^{1/3} \Big\} \nonumber\\
& \qquad \qquad \qquad \bigcap \Big\{\max_{-rN^{2/3}\leq k \leq -1 } \sum_{i={k+1}}^0 -H_i \leq 4|\log \delta|^2  \sqrt{\delta}N^{1/3}\Big\}\Big)  + \mathbb{P}(A^c) \\
& = \mathbb{P}\Big(\max_{1\leq k \leq rN^{2/3} } \sum_{i=1}^k V_i  \leq 4|\log \delta|^2  \sqrt{\delta}N^{1/3} \Big) \nonumber\\
& \qquad \qquad \qquad \cdot \mathbb{P}\Big(\max_{-rN^{2/3}\leq k \leq -1} \sum_{i={k+1}}^0 -H_i \leq 4|\log \delta|^2  \sqrt{\delta}N^{1/3}\Big)  + \mathbb{P}(A^c) 
\end{align*}
The probability $\mathbb{P}(A^c) \leq e^{-Cr^3} \leq \delta$ when $\delta_0$ if fixed sufficiently small. The distributions of $V_i$ and $H_i$ are given explicitly in \eqref{step_dist}, then the product of the two probabilities for the running maximum can be bounded by $C|\log \delta|^6 \delta$ using Proposition \ref{rwest}. With this, we have finished the estimate for \eqref{trest1}.

For \eqref{trest2}, estimate \eqref{bd1} gives the first inequality below, the second inequality follows from Theorem \ref{max_sub_exp} and the verification of $H_i$ being a sub-exponential random variable is presented in Proposition \ref{Ga_sub_exp},
$$\eqref{trest2} \leq \mathbb{P}\Big( \max_{1 \leq j \leq {\delta N^{2/3}}} \sum_{i=1}^j H_i  \geq \frac{1}{2}|\log \delta|^2 \sqrt{\delta}N^{1/3}\Big) \leq e^{-C|\log \delta|^2} \leq \delta.$$
With this, we have given  the estimate for \eqref{trest2} which completes the proof of the theorem.

\end{proof}

Finally, we give a quick proof for Theorem \ref{mid}.

\begin{proof}[Proof of Theorem \ref{mid}]
Again, we may assume that $\delta \geq \tfrac{1}{2} N^{-2/3}$. By definition 
$$\{\textup{\textsf{end}}\leq \delta N^{2/3}\} =  \{\textup{\textsf{end}}_{-\delta N^{2/3}}^+\leq 2\delta N^{2/3}\}.$$
Let us define the event 
$A = \big\{Q^{\textup{ptl}}_{\mathbf 0,N}\{\textup{\textsf{end}}_m^+\leq \delta N^{2/3}\}  \geq e^{-|\log \delta|^2 \sqrt{\delta}N^{1/3}} \big\},$
which is the event inside the probability measure from Proposition \ref{prop_mid}. 
Then, we have the following expectation bound
\begin{align*}
\mathbb{E}\Big[Q^{\textup{ptl}}_{\mathbf 0, N}\{\textup{\textsf{end}} \leq \delta N^{2/3}\} \Big]  &= \mathbb{E}\Big[ Q^{\textup{ptl}}_{\mathbf 0, N}\{\textup{\textsf{end}}_{-\delta N^{2/3}}^+\leq 2\delta N^{2/3}\} \Big]\\
&=  \mathbb{E}\Big[ Q^{\textup{ptl}}_{\mathbf 0, N}\{\textup{\textsf{end}}_{-\delta N^{2/3}}^+\leq 2\delta N^{2/3}\} \mathbbm{1}_A\Big] + \mathbb{E}\Big[Q^{\textup{ptl}}_{\mathbf 0, N} \{\textup{\textsf{end}}_{-\delta N^{2/3}}^+\leq 2\delta N^{2/3}\}  \mathbbm{1}_{A^c}\Big]\\
& \leq \mathbb{P}(A) + e^{-|\log \delta|^2 \sqrt{\delta}N^{1/3}}\\
& \leq C |\log \delta|^{10} \delta \quad \text{ by Proposition \ref{prop_mid}}.
\end{align*}

\end{proof}

\section{Results for the semi-discrete and continuous models}\label{more_models}

Much like the modification discussed in the previous section for the inverse-gamma polymer, our proof readily extends to the Brownian LPP and the O'Connell-Yor polymer. This extension is feasible due to recent advancements in the queueing description of the joint distribution of the Busemann process for these semi-discrete models \cite{groathouse2023jointly, 2021arXiv211210729S}.

For the two continuous models, the KPZ equation and the directed landscape, part of our results from the semi-discrete models directly carry over. This is because, along a horizontal line, the finite-dimensional distributions of their Busemann process are equal to the ones from the O'Connell-Yor polymer and Brownian LPP, respectively, after suitable reflection and scaling \cite{2021arXiv211210729S,groathouse2023jointly}.

We start by defining Busemann functions for the Brownian LPP and the O'Connell-Yor polymer.
Let $\{W_k(\bbullet)\}_{k \in \mathbb{Z}}$ be a collection of independent, two-sided Brownian motions.
For $m,n \in \mathbb{Z}$ and $x,y \in \mathbb{R}$ with $m< n$ and $x\leq y$, defined the collection of up-right paths from $(x,m)$ to $(y,n)$ to be 
$$\Pi^{(x,m), (y,n)} = \{(s_{m-1}, s_m, \dots, s_n) \in \mathbb{R}^{n-m+2} : x= s_{m-1} \leq s_m \leq \dots \leq s_n = y\}.$$
The (Brownian) last-passage value is defined as 
$$L_{(x,m), (y,n)} = \sup\Big\{\sum_{k=m}^n B_k(s_k) - B_k(s_{k-1}) : (s_{m-1}, s_m, \dots, s_n)  \in\Pi^{(x,m), (y,n)}  \Big\},$$
and for $m=n$, we set $L_{(x,m), (y,n)} =  B_m(y) - B_m(x)$.

The O'Connell-Yor polymer is the positive-temperature analog of the Brownian LPP. For $m,n \in \mathbb{Z}$ and $x,y \in \mathbb{R}$ with $m< n$ and $x\leq y$, define the point-to-point partition function to be 
$$Z_{(x,m), (y,n)} = \int e^{\sum_{k=m}^n B_k(s_k) - B_k(s_{k-1})} \mathbbm{1}_{\{x= s_{m-1} \leq s_m \leq \dots \leq s_n =y\}} ds_{m} \dots ds_{n-1},$$
and for $m=n$, we set  $Z_{(x,m), (y,n)} =  e^{B_m(y) - B_m(x)}$.

For $\mathbf x, \mathbf y \in \mathbb{Z} \times \mathbb {R}$ and $\theta\in \mathbb{R}_{>0}$, the Busemann functions for the Brownian LPP and the O'Connell-Yor polymer, in the direction parameterised by $\theta$, are defined to be the following almost sure limits
\begin{align*}
B^\theta_{\mathbf{x}, \mathbf{y}} &= \lim_{n\to \infty} L_{\mathbf{x}, (t_n, n)} -  L_{\mathbf{y}, ( t_n, n)},\\
B^\theta_{\mathbf{x}, \mathbf{y}} &= \lim_{n\to \infty} \log Z_{\mathbf{x}, (t_n, n )} -  \log Z_{\mathbf{y}, (t_n, n)}
\end{align*}
where $\{t_n\}$ is any sequence of real numbers such that $\lim_{n\to \infty} \frac{t_n}{n} = \theta$.
For the existence of these limits, we refer to \cite{Alb-Ras-Sim-20}.

With these definitions, we are ready to state our independence results for the semi-discrete models. 
Let us fix a sequence of directions $\theta_1 \geq \theta_2 \geq \dots  \geq \theta_n>0$, and a sequence of points $\mathbf y_0, \dots, \mathbf y_n$ along a down-right path in $\mathbb{R}\times \mathbb{Z}$ with $\mathbf y_i \cdot \mathbf e_1 \leq \mathbf y_{i+1} \cdot \mathbf e_1$. Then, the following theorem holds. 
\begin{theorem}\label{semi}
The Busemann increments from the Brownian LPP and the O'Connell-Yor polymer, $\big\{B^{\theta_i}_{\mathbf y_{i-1}, \mathbf y_i}\big\}_{i=1}^n$, are mutually independent. 
\end{theorem}

Lastly, we state our result for the two continuous models: the KPZ equation and the directed landscape. From the seminar work of Kardar, Parisi, and Zhang \cite{Kar-Par-Zha-86}, the KPZ equation with initial condition $h_s$ (which is at time $s$) is defined for $t\geq s$ as
$$
\partial_t h(x,t) = \frac{1}{2}\partial_{xx}h(x,t) + \frac{1}{2}(\partial_xh(x,t))^2 + \mathcal{W}(x,t), \quad h(x,s) = h_s(x)
$$
where $\mathcal{W}$ is a space-time white noise. As defined, this equation is ill-posed, but formally, the KPZ equation can be solved via the Cole-Hopf transformation $h(x,t) = \log Z(x,t)$, where $Z$ is the solution to the stochastic heat equation with multiplicative noise:
\begin{equation}\label{she}
\partial_tZ(x,t) = \frac{1}{2}\partial_{xx} Z(x,t) + Z(x,t) \mathcal{W}(x,t), \quad Z(x,s) = e^{h_s(x)}.
\end{equation}
The works \cite{alberts2022greens, MR3162542, MR3189070} constructs a strictly positive, continuous four-parameter random field $\{Z(y,t|x, s) : x,y\in \mathbb{R}, t>s\}$ on the probability space of the white noise $W$ so that 
$$(y,t) \mapsto \int_{\mathbb{R}}e^{h_0(x)}Z(y,t|x,s) dx$$
solves the SHE \eqref{she}. 
The Busemann functions for the KPZ equation or the SHE are constructed in \cite{janjigian2022ergodicity}. For fixed $\theta\in \mathbb{R}$, 
$$B^\theta(x,s;y,t) =  \lim_{r\to -\infty} \log Z(x,s|z_r, r) - \log Z(y,t|z_r,r)$$
where $\{z_r: r<s\wedge t\}$ is any collection of real numbers that satisfies $\lim_{r\to-\infty} \frac{z_r}{|r|} = \theta$.

The directed landscape is a random continuous field landscape is a random continuous function $\mathcal{L}(x,s; y,t)$ where $s,x,t,y \in \mathbb{R}$ with $s<t$. This was constructed in the seminar work \cite{KPZ_DL} and it is conjectured to be the scaling limit of a large class of models in the KPZ universality. The Busemann function (in a fixed direction) for the directed landscape was constructed in the work \cite{rahman2022infinite}. For fixed $\theta\in \mathbb{R}$, 
$$B^\theta(x,s;y,t) = \lim_{r\to\infty} \mathcal{L}(x,s; z_r,r) -  \mathcal{L}(y,t; z_r, r)$$
where $\{z_r: r>s\wedge t\}$ is any collection of real numbers that satisfies $\lim_{r\to \infty} \frac{z_r}{r} = \theta$.

 Let us fix a sequence of directions $\theta_1 \geq \theta_2 \geq \dots  \geq \theta_n$, and a sequence of points $\mathbf y_0, \dots, \mathbf y_n$ along a horizontal line in $\mathbb{R}^2$ with $\mathbf y_i \cdot \mathbf e_1 \leq \mathbf y_{i+1} \cdot \mathbf e_1$. Then, the following theorem holds. 
\begin{theorem}\label{cont}
The Busemann increments from the KPZ equation and the directed landscape, $\big\{B^{\theta_i}_{\mathbf y_{i-1}, \mathbf y_i}\big\}_{i=1}^n$, are mutually independent. 
\end{theorem}
This follows from Theorem \ref{semi}, since it is known that along a horizontal line, the finite-dimensional distributions of the Busemann process from the KPZ equation and the directed landscape are equal to the ones from the O'Connell-Yor polymer and Brownian LPP, respectively, after suitable reflection and scaling \cite{groathouse2023jointly, 2021arXiv211210729S}.


\appendix
\section{Appendix: Random walk estimates}
\subsection{Running maximum upper bound}\label{sub_exp_sec}
First, we state a general result for the running maximum of sub-exponential random variables.
Recall that a random variable $X_1$ is sub-exponential if there exist two positive constants $K_0$ and $\lambda_0$ such that 
\be\label{sub_exp}
\log(\mathbb{E}[e^{\lambda (X_1-\mathbb{E}[X_1])}]) \leq K_0 \lambda^2 \quad \textup{ for $\lambda \in [0, \lambda_0]$}.
\ee
Let $\{X_i\}$ be a sequence of i.i.d.~sub-exponential random variables with the parameters $K_0$ and $\lambda_0$. Define $S_0 = 0$ and $S_k = X_1 + {\mydots} + X_k - k\mathbb{E}[X_1]$ for $k\geq 1$. The following theorem gives an upper bound for the right tail of the running maximum.  
\begin{theorem}[{\cite[Theorem A.11]{ras-sep-she-}}]\label{max_sub_exp}
Let the random walk $S_k$ be defined as above. Then,
$$\mathbb{P} \Big(\max_{0\leq k \leq n} S_k \geq t\sqrt{n}\Big) \leq 
\begin{cases}
e^{-t^2/(4K_0)} \quad  & \textup{if $t \leq 2\lambda_0 K_0 \sqrt n$} \\
e^{-\frac{1}{2}\lambda_0 t\sqrt{n}} \quad  & \textup{if $t \geq 2\lambda_0 K_0 \sqrt n$}
\end{cases}.
$$ 
\end{theorem}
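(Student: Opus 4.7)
The plan is to run the standard Chernoff-plus-Doob argument. Fix $\lambda \ge 0$. Since $(S_k)_{k\ge 0}$ is a mean-zero random walk and $x\mapsto e^{\lambda x}$ is convex, the nonnegative process $(e^{\lambda S_k})_{k\ge 0}$ is a submartingale. Doob's maximal inequality combined with Markov's inequality then yields
$$\mathbb{P}\Bigl(\max_{0 \le k \le n} S_k \ge t\sqrt{n}\Bigr) \;=\; \mathbb{P}\Bigl(\max_{0 \le k \le n} e^{\lambda S_k} \ge e^{\lambda t\sqrt{n}}\Bigr) \;\le\; e^{-\lambda t\sqrt{n}}\,\mathbb{E}\bigl[e^{\lambda S_n}\bigr]$$
for every $\lambda \ge 0$.

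Next, I would bound the moment generating function using the hypothesis. Independence gives $\mathbb{E}[e^{\lambda S_n}] = \bigl(\mathbb{E}[e^{\lambda(X_1 - \mathbb{E}X_1)}]\bigr)^n$, and the one-sided sub-exponential bound \eqref{sub_exp} then gives $\mathbb{E}[e^{\lambda S_n}] \le e^{nK_0 \lambda^2}$ for every $\lambda \in [0, \lambda_0]$. Combined with the previous display,
$$\mathbb{P}\Bigl(\max_{0 \le k \le n} S_k \ge t\sqrt{n}\Bigr) \;\le\; \exp\bigl(nK_0 \lambda^2 - \lambda t\sqrt{n}\bigr), \qquad \lambda \in [0, \lambda_0].$$

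Finally, I would optimize the exponent over $\lambda \in [0, \lambda_0]$, which splits into two regimes depending on whether the unconstrained minimizer is admissible. The quadratic $nK_0\lambda^2 - \lambda t\sqrt{n}$ is minimized on all of $\mathbb{R}$ at $\lambda^\ast = t/(2K_0\sqrt{n})$ with value $-t^2/(4K_0)$; this choice of $\lambda$ is available precisely when $\lambda^\ast \le \lambda_0$, i.e.\ when $t \le 2\lambda_0 K_0 \sqrt{n}$, giving the first (sub-Gaussian) case. When $t \ge 2\lambda_0 K_0 \sqrt{n}$ the constraint is active, so I would instead take $\lambda = \lambda_0$; in that regime one has $nK_0 \lambda_0^2 \le \tfrac12 \lambda_0 t\sqrt{n}$ (which is exactly the assumption of this regime rewritten), so the exponent is at most $-\tfrac12 \lambda_0 t\sqrt{n}$, giving the second case. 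There is no genuine obstacle: the argument is the standard one-sided Chernoff-with-Doob template, and the one-sided form of the sub-exponential hypothesis (controlling only $\lambda \ge 0$) is exactly what is needed, since we are bounding an upper tail of the maximum.
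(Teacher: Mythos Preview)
Your argument is correct and is exactly the standard Chernoff--Doob proof one would expect. The paper itself does not prove this statement---it is quoted verbatim from \cite[Theorem A.11]{ras-sep-she-} with no argument given---so there is nothing to compare against, but your optimization over $\lambda\in[0,\lambda_0]$ with the regime split at $\lambda^\ast=\lambda_0$ is precisely how such results are established.
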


Next, we verify that log gamma and log inverse-gamma random variables are sub-exponential. Recall that if $X\sim\textup{Ga}(\alpha)$, then $\mathbb E[\log X]=\Psi_0(\alpha)$, where $\Psi_0$ is the digamma function, i.e.\ $\Psi_0(\alpha)=(\log\Gamma(\alpha))'$.

\begin{proposition}[{\cite[Proposition  A.12]{ras-sep-she-}}]\label{Ga_sub_exp}
Fix $\varepsilon \in (0, \mu/2)$. There exist positive constants $K_0, \lambda_0$ depending on $\varepsilon$ such that for each $\alpha \in [\varepsilon, \mu-\varepsilon]$ and $X\sim \textup{Ga}(\alpha)$, we have
$$
\mathbb{E}[e^{\lambda(\log X - \Psi_0(\alpha))}]\leq e^{K_0 \lambda^2} \qquad \textup{ for all $\lambda \in [-\lambda_0, \lambda_0]$}.
$$
\end{proposition}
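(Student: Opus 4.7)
The plan is to recognize that for $X\sim \textup{Ga}(\alpha)$ the moment generating function of $\log X$ admits an explicit closed form via the Gamma function, and then reduce the bound to a Taylor expansion of $\log\Gamma$ around $\alpha$. Specifically, for any $\lambda > -\alpha$,
\[
\mathbb{E}\bigl[e^{\lambda \log X}\bigr] \;=\; \mathbb{E}[X^\lambda] \;=\; \frac{1}{\Gamma(\alpha)}\int_0^\infty x^{\alpha+\lambda-1} e^{-x}\,dx \;=\; \frac{\Gamma(\alpha+\lambda)}{\Gamma(\alpha)}.
\]
So after taking logarithms and subtracting $\lambda\Psi_0(\alpha) = \lambda\,\mathbb{E}[\log X]$, the inequality to prove reduces to the purely analytic statement
\[
g(\lambda) \;:=\; \log\Gamma(\alpha+\lambda) - \log\Gamma(\alpha) - \lambda\,\Psi_0(\alpha) \;\le\; K_0\lambda^2,
\qquad \lambda\in[-\lambda_0,\lambda_0].
\]

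The next step is Taylor expansion of $g$ around $\lambda=0$. By direct differentiation, $g(0)=0$, $g'(\lambda) = \Psi_0(\alpha+\lambda) - \Psi_0(\alpha)$ so $g'(0) = 0$, and $g''(\lambda) = \Psi_1(\alpha+\lambda)$, where $\Psi_1$ is the trigamma function. The integral form of Taylor's remainder then gives, for some $\tilde\lambda$ between $0$ and $\lambda$,
\[
g(\lambda) \;=\; \tfrac{\lambda^2}{2}\,\Psi_1(\alpha+\tilde\lambda).
\]
So the bound $g(\lambda)\le K_0\lambda^2$ will follow at once from a uniform bound on $\Psi_1$ on the appropriate interval.

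To produce such a uniform bound, I would fix $\lambda_0 := \varepsilon/2$. Then for every $\alpha \in [\varepsilon,\mu-\varepsilon]$ and every $\lambda \in [-\lambda_0,\lambda_0]$, the point $\alpha+\tilde\lambda$ lies in the fixed compact interval $[\varepsilon/2,\,\mu-\varepsilon/2] \subset (0,\infty)$. The trigamma function $\Psi_1$ is continuous and strictly positive on $(0,\infty)$, hence bounded on this compact interval by some finite constant $M=M(\varepsilon,\mu)$. Setting $K_0 := M/2$ yields the desired inequality simultaneously for all admissible $\alpha$ and all $|\lambda|\le \lambda_0$.

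There is no real obstacle here: the only thing to check is that the choice $\lambda_0 = \varepsilon/2$ keeps $\alpha+\lambda$ bounded away from $0$ so that the moment formula and the bound on $\Psi_1$ remain valid, and that $K_0,\lambda_0$ depend only on $\varepsilon$ (and $\mu$), not on the specific $\alpha$. Both are built into the construction above.
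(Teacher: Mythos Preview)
Your argument is correct. The paper does not actually prove this proposition; it merely quotes it from \cite[Proposition~A.12]{ras-sep-she-} without reproducing the argument. Your approach---computing $\mathbb{E}[X^{\lambda}]=\Gamma(\alpha+\lambda)/\Gamma(\alpha)$ exactly, reducing to the second-order Taylor remainder of $\log\Gamma$, and bounding $\Psi_1$ uniformly on the compact interval $[\varepsilon/2,\mu-\varepsilon/2]$---is the standard one and is precisely what the cited reference does. The choices $\lambda_0=\varepsilon/2$ and $K_0=\tfrac12\sup_{t\in[\varepsilon/2,\mu-\varepsilon/2]}\Psi_1(t)$ depend only on $\varepsilon$ and $\mu$, as required.
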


\subsection{Running maximum lower bound}\label{sec_rw}

The following proposition states that even if the random walk has a small negative drift, it is unlikely that the running maximum is very small. 
Let $\{X_i\}_{i\in \mathbb{Z}_{>0}}$ be an i.i.d.~sequence of random variables with
$$\mathbb{E}[X_i] = \mu, \quad \Var[X_i] = 1 \quad \text{and} \quad \mathbb{E}[|X_i-\mu|^3] = c_3 < \infty.$$ Define $S_k = \sum_{i=1}^k X_i$ for $k \geq 1$. 
We have the following proposition, which bounds the probability that the running maximum of a random walk is small. 
\begin{proposition}[{\cite[Proposition A.13]{ras-sep-she-}}]\label{rwest}  There exists a positive  constant $C$ such that for any $l >0$, we have
\beq\mathbb{P}\Big(\max_{1\leq k \leq N} S_k < l\Big) \leq C(c_3l+c_3^2)(|\mu| + 1/\sqrt N).\eeq
\end{proposition}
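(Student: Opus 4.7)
The plan is to estimate $\mathbb{P}(M_N < l)$, where $M_N := \max_{1 \leq k \leq N} S_k$, by Gaussian comparison. The two summands in the final factor $(|\mu| + 1/\sqrt N)$ reflect the natural scales present: $1/|\mu|$ is the drift scale, and $\sqrt N$ is the diffusive scale. When $\mu = 0$ one expects $\mathbb{P}(M_N < l) \sim l/\sqrt N$; when $|\mu| \gg 1/\sqrt N$ is macroscopic, the infinite-horizon running maximum $M_\infty$ is a.s.\ finite with approximately exponential law of rate $2|\mu|$, giving $\mathbb{P}(M_\infty < l) \sim 2|\mu| l$ for small $l$. The bound interpolates between these two regimes.

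I first handle the Brownian analogue. For $W$ a standard Brownian motion with drift $\mu$, the reflection principle yields the explicit formula
\begin{equation*}
\mathbb{P}\Big(\max_{0 \leq t \leq N}(W_t + \mu t) < l\Big) = \Phi\Big(\tfrac{l - \mu N}{\sqrt N}\Big) - e^{2\mu l}\, \Phi\Big(\tfrac{-l - \mu N}{\sqrt N}\Big).
\end{equation*}
Differentiating at $l = 0$ gives $\tfrac{2}{\sqrt N}\phi(\mu\sqrt N) - 2\mu\Phi(-\mu\sqrt N)$. For $\mu \geq 0$ the second term is non-positive and the first is at most $C/\sqrt N$; for $\mu \leq 0$ the second is at most $2|\mu|$. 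Hence the derivative is uniformly bounded by $C(|\mu| + 1/\sqrt N)$, and since the formula vanishes at $l = 0$, integrating yields the Brownian version of the bound without any $c_3$ factor.

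Next, I transfer this estimate to the random walk via a Berry--Esseen-type comparison. The cleanest route is through the Sparre Andersen identity, which expresses $\mathbb{P}(M_N < l)$ in terms of the joint law of the partial sums $(S_k)_{k \leq N}$; each contribution can then be compared to its Gaussian counterpart using Berry--Esseen. The Berry--Esseen error of order $c_3 / \sqrt N$ in the CLT for the partial sums is what produces the $c_3^2$ correction in the final bound, whereas the $c_3 l$ factor arises from a density-type estimate applied across an interval of length $l$ (with implicit constants picking up the third-moment normalization).

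The main technical obstacle is arranging the Berry--Esseen corrections to assemble into precisely the stated form, especially in the crossover regime $|\mu| \asymp 1/\sqrt N$. A convenient workaround is a block decomposition on the scale $n_0 := \min(N, \mu^{-2})$: within a block of length $n_0$, the drift accumulates at most $\sqrt{n_0}$, so the block is essentially driftless and a sharp Berry--Esseen-type running-max estimate applies with error $c_3 / \sqrt{n_0}$; across $N/n_0$ blocks, the drift contribution integrates to produce the $|\mu|$ term, while the accumulated Berry--Esseen errors combine to the $c_3^2$ term.
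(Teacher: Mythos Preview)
The paper does not actually prove this proposition: it is quoted verbatim as \cite[Proposition~A.13]{ras-sep-she-} and no argument is supplied here. So there is no ``paper's own proof'' to compare against; the result is imported as a black box.

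On its own merits, your outline identifies the correct intuition and the Brownian computation is fine. The genuine gap is in the transfer step. Invoking ``the Sparre Andersen identity'' is not enough: that identity concerns the distribution of the number of positive partial sums (or the location of the maximum), not directly $\mathbb{P}(M_N<l)$ for a general level $l$, and you have not written down which identity you mean or how Berry--Esseen would be applied termwise. The assertion that the Berry--Esseen errors ``assemble into precisely the stated form'' with a $c_3^2$ correction and a $c_3 l$ factor is exactly the heart of the matter, and you have only described the desired outcome, not a mechanism that produces it. Likewise, the block decomposition at scale $n_0=\min(N,\mu^{-2})$ is plausible heuristics, but you need a concrete running-maximum bound on a single block (with the right dependence on $c_3$) and a way to chain blocks that does not lose more than a constant; neither is supplied. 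As written this is a plan, not a proof; if you want a self-contained argument you should consult the cited reference for the actual technique.
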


\bibliographystyle{amsplain}
\bibliography{time}

\end{document}